\newcommand{\cC}{{C}}
  \def\sw#1{{\sb{(#1)}}} 
  \def\su#1{{\sp{[#1]}}}  
  \def\suc#1{{\sp{(#1)}}}
  \def\<{{\langle}} 
  \def\>{{\rangle}}
  \def\eps{\varepsilon}
  \def\note#1{{}}
  \def\note#1{} 
  \def\cM{{\mathcal M}} 
  \def\cN{{\mathcal N}} 
  \def\cC{{\mathcal C}} 
  \def\cA{{\mathcal A}} 
  \def\cB{{\mathcal B}} 
  \def\cD{{\mathcal D}} 
  \def\cE{{\mathcal E}}
  \def\rhom#1#2#3{{{\rm Hom}\sb{#1}\left(#2,#3\right)}}
  \def\rend#1#2{{{\rm End}\sb{#1}(#2)}}
  \def\Rhom#1#2#3{{{\rm Hom}\sp{#1}(#2,#3)}}
  \def\beq{\begin{equation}} 
  \def\eeq{\end{equation}}
  \def\id{\mathrm{id}}
  \def\ot{{\otimes}}
  \def\roM{\varrho^{M}} 
   \def\roN{\varrho^{N}} 
    \def\roV{\varrho^{V}}
  \newcounter{zlist} 
  \newenvironment{zlist}{\begin{list}{(\arabic{zlist})}{ 
  \usecounter{zlist}\leftmargin2.5em\labelwidth2em\labelsep0.5em 
  \topsep0.6ex
  \parsep0.3ex plus0.2ex minus0.1ex}}{\end{list}}
  \newcounter{blist} 
  \newenvironment{blist}{\begin{list}{(\alph{blist})}{ 
  \usecounter{blist}\leftmargin2.5em\labelwidth2em\labelsep0.5em 
  \topsep0.6ex 
  \parsep0.3ex plus0.2ex minus0.1ex}}{\end{list}} 
  \newcounter{rlist}
\def\stac#1{\raise-.2cm\hbox{$\stackrel{\displaystyle\otimes}{\scriptscriptstyle{#1}}$}}
\def\cten#1{\raise-.2cm\hbox{$\stackrel{\displaystyle\widehat{\otimes}}
{\scriptscriptstyle{#1}}$}}
  \def\Label#1{\label{#1}\ifmmode\llap{[#1] }\else 
  \marginpar{\smash{\hbox{\tiny [#1]}}}\fi} 
  \def\Label{\label}
  \newtheorem{proposition}{Proposition}[section]
  \newtheorem{lemma}[proposition]{Lemma} 
  \newtheorem{corollary}[proposition]{Corollary} 
  \newtheorem{theorem}[proposition]{Theorem} 
  \theoremstyle{definition} 
  \newtheorem{definition}[proposition]{Definition}
  \newtheorem{example}[proposition]{Example}
  \theoremstyle{remark} 
  \newtheorem{remark}[proposition]{Remark}
  \newcounter{c} 
  \newcommand{\etyk}[1]{\vspace{-7.4mm}$$\begin{equation}\Label{#1} 
  \addtocounter{c}{1}} 
  \renewcommand{\]}{\ifnum \value{c}=1 $$\else \end{equation}\fi} 
\newcommand{\cov}{{\rm cov}}
\def\ot{\otimes}
\def\KK{{\mathbb K}}
\def\NN{{\mathbb N}}
\def\ZZ{{\mathbb Z}}
\newcommand{\Cc}{\mathcal{C}}
\newcommand{\Pp}{\mathcal{P}}
\newcommand{\Ss}{\mathcal{S}}
\newcommand{\Tt}{\mathcal{T}}
\def\cov{\stackrel{\rightarrow}{\nabla}}
\def\voc{\stackrel{\leftarrow}{\nabla}}
\def\*C{{}^*\hspace*{-1pt}{\Cc}}
\def\text#1{{\rm {\rm #1}}}
\def\mod{\mathbf{Mod}}
 \def\1{\mathbf{1}}
 \def\cdga{\mathbf{CDGA}}
  \def\sfcdga{\mathbf{sf\,CDGA}}
    \def\crg{\mathbf{Crg}}
        \def\bcrg{\mathbf{bCrg}}
      \def\scrg{\mathbf{sCrg}}
\def\HoA{$\mathbf{Ho}(\mod$-$\cA)$}
\def\com#1{$\mathbf{com}$-$#1$}
\begin{document} 

\title[Curved differential graded algebras and corings]{Curved differential graded algebras and corings} 
 \author{Tomasz Brzezi\'nski}
 \address{ Department of Mathematics, Swansea University, 
  Singleton Park, \newline\indent  Swansea SA2 8PP, U.K.} 
  \email{T.Brzezinski@swansea.ac.uk}   

    \date{January 2013} 
  \subjclass[2010]{16T15; 16E45; 18E30} 
  \begin{abstract} 
A relationship between curved differential algebras and corings is established and explored. In particular it is shown that the category of semi-free curved differential graded algebras is equivalent to the category of corings with surjective counits. Under this equivalence, comodules over a coring correspond to integrable connections or quasi-cohesive curved modules, while contramodules over a coring correspond to   a specific class of curved modules introduced  and termed $\ZZ$-divergences in here. 
\end{abstract} 
  \maketitle

\section{Introduction}
It is well known that there is a one-to-one correspondence between corings with a grouplike element and semi-free differential graded algebras; see \cite{Roi:mat}. Under this correspondence comodules of a coring coincide with modules with flat connections over the induced differential graded algebra; see \cite{Brz:gro}. The first aim of this note is to extend the above relationship to any corings with a distinguished element (not necessarily a grouplike element) and thus interpret them as {\em curved} differential graded algebras of a particular kind. The second (less-direct) aim is to explore potential for interaction between these two points of view on equivalent mathematical objects. We make the first few steps in direction of this objective by associating curved differential graded algebras to progenerators, partial orders and matrices, and introducing a new class of curved differential graded modules.

The history of corings goes back to at least Sweedler's paper \cite{Swe:pre}, while curved differential graded algebras are a much more recent invention; see \cite{GetJon:alg} and \cite{Pos:non}. The former attracted some attention in algebra and ring theory (see \cite{Brz:com} for a review), the latter start playing an increasingly significant role in non-commutative geometry \cite{Sch:sup}, \cite{Blo:dua},  \cite{Blo:dua2} and mathematical physics, in particular in analysis of Landau-Ginzburg models; see, for example \cite{Laz:gra}, \cite{CalTu:cur}, \cite{Seg:clo}. Both are arrived at by relaxing requirements that previously have been considered to be natural. In the case of corings one studies comultiplication on bimodules rather than vector spaces or modules over a commutative ring. In the case of curved differential graded algebras one relaxes the nilpotency of the derivation. In Section~2 we collect basic information about curved differential graded algebras and their modules and about corings and their comodules and contramodules. 

The main results of the paper are contained in Sections 3 and 4 whose contents we summarise presently. The aim of Section 3 is to establish an equivalence of categories of semi-free curved differential graded algebras to the category of corings with surjective counits. This is achieved in two steps. First, we describe a construction which to every coring with a chosen element associates a semi-free curved differential algebra. This construction is functorial and it yields an object in the category of {\em curved} differential graded algebras isomorphic to a genuine (i.e.\ not curved) differential graded algebra.  On the other hand, if the value of the counit on the chosen element is one (such an element is called a {\em base point} and the pair: coring and element is termed a {\em based coring}), then another semi-free curved differential graded algebra can be constructed. The latter construction admits a reciprocal, which is described in the second part of Section~3, and yields the aforementioned equivalence of categories. An interesting observation here is that the constructed curved differential graded algebra is isomorphic to a differential graded algebra if, and only if, the coring has a group-like element. We show also that, under this correspondence, complexes of comodules of a coring yield integrable connections or {\em quasi-cohesive modules} over the associated curved differential graded algebra and vice versa.  Exploring this correspondence we introduce the differential graded category of {\em comodule complexes} and reveal the triangulated structure of its homotopy category.

The relationship between comodules and integrable connections raises a natural question: 
what specific curved modules correspond to contramodules on the differential algebra side? We answer this question in Section~4 by introducing the notion of an {\em integrable $\ZZ$-divergence} which is a curved differential graded algebra version of the notion of a hom-connection from \cite{Brz:con}, which, in turn, is a noncommutative counterpart of that of a {\em right connection} introduced by Manin in context of supermanifolds \cite[Chapter~4\S 5]{Man:gau}. We relate contramodules with integrable $\ZZ$-divergences and show that $\ZZ$-divergences supplement connections in the following way: An integrable connection concentrated in one degree yields a non-negatively graded curved module while an integrable $\ZZ$-divergence concentrated in one degree yields a non-positively graded curved module.

All algebras in this paper are associative with identity and over a base commutative ring $\KK$. The degree of a homogeneous element $m$ of a graded $\KK$-module $M^\bullet=\bigoplus_{n\in \ZZ} M^n$ is denoted by $|m|$. We write $A$ for the zero-degree component of a graded algebra $A^\bullet = \bigoplus_{n\in \NN} A^n$, and $[a,b]$ denotes the graded commutator, i.e.\ $[a,b] = ab -(-1)^{|a||b|}ba$. We often write $V$ for the identity morphism of an object $V$.

\section{Preliminaries on curved differential graded algebras and corings}\label{sec.cdga} 
\setcounter{equation}{0}
The aim of this section is to recall definitions and describe basic examples of curved differential graded algebras and their modules, and corings and their co- and contra-modules.
\subsection{Curved differential graded algebras and their modules}
\begin{definition}[\cite{GetJon:alg}, \cite{Pos:non}] A {\em curved differential graded algebra} is a triple $\cA = (A^\bullet, d,\gamma)$ consisting of an $\NN$-graded algebra $A^\bullet = \oplus_{n\in \NN} A^n$, a degree-one $\KK$-linear map $d: A^\bullet \to A^{\bullet +1}$ and an element $\gamma\in A^2$ that satisfy the following conditions:
\begin{blist}
\item $d$ is a graded derivation, i.e.\ it satisfies the graded Leibniz rule;
\item for all $a\in A^\bullet$, 
$$
d\circ d(a) = [\gamma, a];
$$
\item $d \gamma = 0$.
\end{blist}
The element $\gamma$ is referred to as the {\em curvature} of $\cA$ and condition (c) is known as the {\em Bianchi identity}.
\end{definition}
\begin{definition}[\cite{Pos:non}]\label{def.cat}
A morphism of curved differential algebras from $\cA = (A^\bullet, d_A,\gamma_A)$ to $\cB = (B^\bullet, d_B,\gamma_B)$ is a pair $(f, \omega)$ consisting of a morphism $f: A^\bullet \to B^\bullet$ of graded algebras and an element $\omega \in B^1$ such that:
\begin{blist}
\item for all $a\in A$,
$$f(d_Aa) = d_B f(a) +  [\omega, f(a)];$$
\item $f( \gamma_A) = \gamma_B +d_B\omega + \omega^2$.
\end{blist}
The composition of morphisms $(f,\omega):\cA \to \cB$, $(f',\omega'):\cB \to \cC$ is defined as $(f'\circ f, f'(\omega)+\omega')$, and the category of curved differential graded algebras  is denoted by $\cdga$.
\end{definition}

\begin{example}\label{ex.semi-free}
By a {\em semi-free curved differential graded algebra}  we understand a curved differential algebra $\cA$ with $A^\bullet = T_A(V)$, the tensor algebra of an $A$-bimodule $V$. That is
$$
A^0 = A, \qquad A^1 = V, \qquad A^n = V^{\ot _A n}.
$$
A morphism from a semi-free differential graded algebra $\cA$ to any curved differential graded algebra $\cB$ is fully determined by $\omega \in B^1$ and two maps:
\begin{blist}
\item an algebra map $f_0: A^0\to B^0$,
\item an $A$-bimodule map $f_1: A^1 \to B^1$, where the $A$-bimodule structure on $B^1$ is induced through $f_0$.
\end{blist}
We write $T(f_0,f_1)$ for the corresponding morphism of graded algebras, i.e., for all $a\in A$, $v_1,\ldots , v_n\in A^1$,
$$
T(f_0,f_1)(a) = f_0(a), \quad T(f_0,f_1)(v_1\ot_A v_2\ot_A \ldots \ot_A v_n) = f_1(v_1) f_1(v_2) \ldots f_1(v_n).
$$
The full subcategory of $\cdga$ consisting of semi-free curved differential graded algebras is denoted by $\sfcdga$.
\end{example}

\begin{example}\label{ex.sweed}
To any algebra $A$, a semi-free curved differential algebra can be associated by choosing $V= A\ot A$, any $x = \sum_i x^i\ot y^i \in A\ot A$, setting
$$
\gamma = \sum_{i,j} x^i\ot y^ix^j\ot y^j - \sum_i x^i\ot 1\ot y^i \in A\ot A\ot A \cong V\ot _A V,
$$
and defining
\begin{eqnarray*}
d(a_0\ot \ldots \ot a_n) &=& xa_0 \ot \ldots \ot a_n + \sum_{k=1}^n (-1)^k a_0\ot  \ldots  \ot a_{k-1}\ot 1\ot a_k\ot \ldots \ot a_n\\
&& +(-1)^{n+1} a_0\ot \ldots \ot a_nx.
\end{eqnarray*}
\end{example}

\begin{definition}\label{def.mod}
A {\rm right module} over a curved differential graded algebra $\cA = (A^\bullet, d,\gamma)$ or a {\em curved differential graded $\cA$-module} is a pair $\cM = (M^\bullet,d_M)$, consisting of a $\ZZ$-graded module $M^\bullet = \oplus_{n\in \ZZ}M^n$ and a degree-one $\KK$-linear map $d_M: M^\bullet \to M^{\bullet+1}$ such that
\begin{blist}
\item for all $a\in A^\bullet$ and $m\in M^n$,
$$
d_M(ma) = d_M(m) a + (-1)^n md(a);
$$
\item for all $m\in M^\bullet$,
$$
d_M\circ d_M( m) = - m\gamma.
$$
\end{blist}
Morphisms of right $\cA$-modules are simply morphisms of $A^\bullet$-modules. The category $\mod$-$\cA$ of right $\cA$-modules is a differential graded category;  the homomorphisms $\rhom \cA \cM\cN$ are complexes with the coboundary
$$
d_{\cM,\cN} (f)  = d_N\circ f - (-1)^n f\circ d_M,
$$
for all degree-$n$ right $A^\bullet$-linear maps $f: M^\bullet \to N^\bullet$.
\end{definition}

Since $\mod$-$\cA$ is a differential graded category, one can associate to it two other categories both with the same objects as those of $\mod$-$\cA$ but with differently defined morphisms. The morphisms in $Z^0(\mod$-$\cA)$ are the zero-degree closed morphisms of $\mod$-$\cA$. The morphisms in the {\em homotopy category} of $\mod$-$\cA$, from $\cM$ to $\cN$ are elements of the zero cohomology group of $\rhom \cA \cM\cN$. The homotopy category of $\mod$-$\cA$ is denoted by \HoA. We refer the reader to \cite{Kel:dif} for more information on differential graded categories.

\begin{example}\label{ex.connection} An example of a module over $\cA$ is provided by a {\em quasi-cohesive module} or an {\em integrable $\ZZ$-connection} \cite[Definition~3.7]{Blo:dua}. Write $A:= A^0$ and, for any $\ZZ$-graded $A$-module $M^\bullet$ equip $M^\bullet \ot _A A^\bullet$ with the tensor product grading, i.e.\ $|m \ot a| = |m| + |a|$. A pair $\cM = (M^\bullet, \cov)$, consisting of a graded right $A$-module $M^\bullet$ and a degree-one $\KK$ linear endomorphism $\cov$ of  $M^\bullet \ot _A A^\bullet$ is called an {\em integrable $\ZZ$-connection} provided
\begin{blist}
\item for all $a\in A^\bullet$ and homogeneous $x\in M^\bullet \ot _A A^\bullet$,
$$
\cov(xa) = \cov(x) a + (-1)^{|x|} xda;
$$
\item for all $x\in M^\bullet \ot _A A^\bullet$,
$$
\cov\circ \cov (x) = -x\gamma.
$$
\end{blist}
Condition (b) is referred to as {\em integrability} of the $\ZZ$-connection and the map $\cov$ is called a {\em covariant derivative}. An integrable $\ZZ$-connection is an example of a curved differential graded module over $\cA$ with the module part provided by $M^\bullet \ot _A A^\bullet$ and  the differential $\cov$. 

The covariant derivative of an integrable $\ZZ$-connection is fully determined by its value on $M^\bullet = M^\bullet \ot_A A^0$. Writing $\cov^n$ for the component of $\cov$ given as $\cov^n: M^\bullet \mapsto M^{\bullet -n+1} \ot_A A^n$ one easily finds that $(-1)^k\cov^1$ is a (usual) covariant derivative on $M^k$ while $\cov^{n\neq 1}$ are $A$-module maps. Thus to construct a $\ZZ$-connection is the same as to specify suitable maps $\cov^{k,n} : M^k\to M^{k-n+1} \ot_A A^n$, and this is how we will describe $\ZZ$-connections in examples.

A quasi-cohesive module is called a {\em cohesive module} provided $M^\bullet$ is a finitely generated and projective $A$-module and is bounded in both directions as a $\ZZ$-graded module.
\end{example}

\begin{definition}\label{def.bimod}
Let $\cA = (A^\bullet, d_A, \gamma_A)$ and $\cB = (B^\bullet, d_B,\gamma_B)$ be curved differential graded algebras. A curved $(\cA,\cB)$-bimodule is a pair $\cE=(E^\bullet, d_E)$, where $E^\bullet$ is a $\ZZ$-graded $(A^\bullet, B^\bullet)$-bimodule and $d_E: E^\bullet \to E^\bullet$ is a degree one map such that
\begin{blist}
\item for all $a\in A^\bullet$, $b\in B^\bullet$ and $e\in E^\bullet$,
$$
d_E(aeb) = d_A(a)eb + (-1)^{|a|}ad_E(e) b + (-1)^{|a| +|e|} aed_B(b);
$$
\item for all $e\in E^\bullet$,
$$
d_E\circ d_E(e) = \gamma_A e - e\gamma_B.
$$
\end{blist}
\end{definition}

\begin{example}\label{ex.bimod}
$(A^\bullet, d)$ is a curved $(\cA,  \cA)$-bimodule  of any curved differential graded algebra $\cA = (A^\bullet, d, \gamma)$.
\end{example}

\begin{example}\label{ex.bimod2}  
Let $\cM = (M^\bullet, d_M)$ be a curved $\cA$-module. Then the endomorphism ring $S^\bullet = {\rend \cA \cM}^\bullet$ gives rise to the differential graded algebra
$\Ss = (S^\bullet, d_S)$, where $d_S (s) = d_M\circ s - (-1)^{|s|} s\circ d_M$, and $\cM$ is a curved $(\Ss, \cA)$-bimodule with the $\Ss$-action $s m = s(m)$.
\end{example}

Curved bimodules induce curved modules.

\begin{lemma}\label{lem.ind}
Let $\cA = (A^\bullet, d_A, \gamma_A)$ and $\cB = (B^\bullet, d_B,\gamma_B)$ be curved differential graded algebras and let $\cE= (E^\bullet, d_E)$ be a curved $(\cA,\cB)$-bimodule.

\begin{zlist}
\item Given a (right) curved $\cB$-module $\cM=(M^\bullet,d_M)$, denote by $\Xi_B(E,M)^\bullet$ the right $A^\bullet$-module of (internal) graded $B^\bullet$-homomorphisms $E^\bullet \to M^\bullet$. Define,
$$
d_\Xi: \Xi_B(E,M)^\bullet \to \Xi_B(E,M)^\bullet , \qquad \xi \mapsto d_M\circ \xi - (-1)^{|\xi |} \xi \circ d_E.
$$
Then $\Xi_B(\cE,\cM)= (\Xi_B(E,M)^\bullet , d_\Xi)$ is a curved differential graded $\cA$-module.

\item Given a (right) curved $\cA$-module $\cM=(M^\bullet,d_M)$, denote by $T_A(M,E)^\bullet$ the right $B^\bullet$-module given by the tensor product $M^\bullet\ot_A E^\bullet$. Define,
$$
d_{M\ot E}: T_A(M,E)^\bullet \to T_A(M,E)^\bullet , \qquad m\ot_A e \mapsto d_M(m) \ot e + (-1)^{|m |} m \ot  d_E(e).
$$
Then $\Tt_A(\cM,\cE)= (T_A(M,E)^\bullet, d_{M\ot E})$ is a curved differential graded $\cB$-module.
\end{zlist}
\end{lemma}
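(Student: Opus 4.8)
The plan is to verify, in each of the two parts, the two defining conditions of a curved differential graded module from Definition~\ref{def.mod}, namely the graded Leibniz rule (a) and the curvature condition $d^2 = -(\,\cdot\,)\gamma$ (b), after first pinning down the relevant module structures and checking that the proposed degree-one maps are well defined. The recurring mechanism behind condition (b) is that, for a differential built by combining two square-controlled maps (a commutator-type one in part (1), a tensor-type one in part (2)), the cross terms cancel and one is left with the sum of the two individual squares; the bimodule curvature identity $d_E\circ d_E(e) = \gamma_A e - e\gamma_B$ of Definition~\ref{def.bimod}(b) is then exactly what converts these squares into the single-sided curvature term demanded by Definition~\ref{def.mod}(b).

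For part (1), I would equip $\Xi_B(E,M)^\bullet$ with the right $A^\bullet$-action $(\xi a)(e) = \xi(ae)$, which raises degree by $|a|$ and is readily seen to be right $B^\bullet$-linear and to satisfy the right-module axiom $(\xi a)a' = \xi(aa')$. The first genuine step is well-definedness: one checks that $d_\Xi(\xi) = d_M\circ\xi - (-1)^{|\xi|}\xi\circ d_E$ is again right $B^\bullet$-linear, using Definition~\ref{def.bimod}(a) with trivial left factor to expand $d_E(eb)$ and condition (a) of Definition~\ref{def.mod} for $\cM$ to expand $d_M(\xi(e)b)$; the two $d_B(b)$-terms cancel and leave $d_\Xi(\xi)(eb)=d_\Xi(\xi)(e)\,b$. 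Condition (a) then follows by expanding $d_E(ae)$ via Definition~\ref{def.bimod}(a), and condition (b) by the direct computation
$$
d_\Xi\circ d_\Xi(\xi) = d_M\circ d_M\circ\xi - \xi\circ d_E\circ d_E,
$$
into which I substitute $d_M\circ d_M(\,\cdot\,) = -(\,\cdot\,)\gamma_B$ and $d_E\circ d_E = \gamma_A(\,\cdot\,) - (\,\cdot\,)\gamma_B$; the two $\gamma_B$-contributions cancel (using right $B^\bullet$-linearity of $\xi$ on $e\gamma_B$) and leave precisely $-\xi\gamma_A$.

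For part (2), the right $B^\bullet$-module structure on $M^\bullet\ot_A E^\bullet$ is $(m\ot e)b = m\ot eb$, inherited from $E^\bullet$. Here the initial obstacle is that $d_{M\ot E}$ must be shown to descend to the tensor product balanced over the graded algebra $A^\bullet$: comparing $d_{M\ot E}(ma\ot e)$ with $d_{M\ot E}(m\ot ae)$ by means of condition (a) of Definition~\ref{def.mod} for $\cM$ and of Definition~\ref{def.bimod}(a), the three resulting terms match after rebalancing $a$ and $d_A(a)$ across $\ot_A$. Condition (a) is then a short computation using Definition~\ref{def.bimod}(a) with trivial left factor, while for (b) the cross terms cancel by the sign rule, giving $d_{M\ot E}\circ d_{M\ot E}(m\ot e) = d_M\circ d_M(m)\ot e + m\ot d_E\circ d_E(e)$; substituting $d_M\circ d_M(m) = -m\gamma_A$ and $d_E\circ d_E(e) = \gamma_A e - e\gamma_B$ and rebalancing $\gamma_A\in A^2$ across the tensor product makes the $\gamma_A$-terms cancel, leaving $-(m\ot e)\gamma_B$ as required.

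The main obstacle is bookkeeping rather than conceptual: carrying the Koszul signs correctly through the well-definedness and Leibniz checks, and, for condition (b), securing the two curvature cancellations. In particular the cancellation of the $\gamma_A$ terms in part (2) depends essentially on the tensor product being balanced over the full graded algebra $A^\bullet$ (so that $m\gamma_A\ot e = m\ot\gamma_A e$ with $\gamma_A\in A^2$), and the corresponding $\gamma_B$ cancellation in part (1) on the right $B^\bullet$-linearity of $\xi$; these are the points I would check most carefully.
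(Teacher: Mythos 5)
Your proof is correct and follows essentially the same direct-verification route as the paper: the same right $A^\bullet$-action $(\xi a)(e)=\xi(ae)$, the same well-definedness, Leibniz and curvature checks for part (1), with the cross terms cancelling to give $d_\Xi\circ d_\Xi(\xi) = d_M^2\circ\xi - \xi\circ d_E^2$ and then $-\xi\gamma_A$ via Definition~\ref{def.bimod}(b) and the $B^\bullet$-linearity of $\xi$. The paper dispatches part (2) with ``similar calculations'', so your explicit treatment there --- in particular flagging that the tensor product must be balanced over the full graded algebra $A^\bullet$ so that $m\gamma_A\ot e = m\ot\gamma_A e$, which is also what makes $d_{M\ot E}$ well defined at all --- is a correct and worthwhile elaboration of what the paper leaves implicit.
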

\begin{proof}
Recall that $\Xi_B(E,M)^i$ consists of all $B^\bullet$-homomorphisms $\xi: E^\bullet \to M^\bullet$ such that, for all $n\in \ZZ$,  $\xi(E^n)\subseteq M^{n+i}$. Even more explicitly, for all $e\in E^n$ and $b\in B^m$,  
$$
\xi(e b) =\xi(e) b \in M^{n+m+i}.
$$
The right $A^\bullet$ multiplication on $\Xi_B(E,M)^\bullet$ is defined by $(\xi a)(e) = \xi(ae)$, and it makes $\Xi_B(E,M)^\bullet$ a graded $A^\bullet$-module, since clearly, for all homogeneous $\xi, a ,e$, $\xi(ae) \in M^{|a| + |e| +|\xi |}$, i.e.\ $|\xi a| = |\xi | + |a|$. 

First we need to check whether $d_\Xi$ is well-defined, i.e.\ whether $d_\Xi(\xi)$ is a graded right $B^\bullet$-module homomorphism. Take any homogeneous $\xi\in \Xi_B(E,M)^\bullet$, $e\in E^\bullet$ and $b\in B^\bullet$ and compute
\begin{eqnarray*}
(d_\Xi\xi)(eb) &=& d_M(\xi(eb)) - (-1)^{|\xi |}\xi( d_E(eb))\\
&=& d_M(\xi(e))b + (-1)^{|\xi(e)|}\xi(e)d_Bb - (-1)^{|\xi |}\xi( d_E(e)b) - (-1)^{|\xi | +|e|}\xi(e)d_Bb\\
&=& (d_\Xi \xi)(e) b,
\end{eqnarray*}
where the Leibniz rule for a curved module, $B^\bullet$-linearity of $\xi$ and the fact that $|\xi(e)| = |\xi | +|e|$ have been used. Since $d_M$ and $d_E$ are degree-one maps, so is $d_\Xi$. Next, to check the Leibniz rule, take homogeneous $\xi\in \Xi_B(E,M)^\bullet$, $e\in E^\bullet$ and $a\in A^\bullet$ and compute
\begin{eqnarray*}
d_\Xi(\xi a)(e) &=& d_M(\xi(ae)) - (-1)^{|\xi | +|a|}\xi( a d_Ee)\\
&=& d_M(\xi(ae)) - (-1)^{|\xi |}\xi( d_E(ae)) + (-1)^{|\xi |}\xi( d_A(a)e)\\
&=& \left(d_\Xi (\xi) a + (-1)^{|\xi |}\xi d_Aa\right)\left(e\right),
\end{eqnarray*}
where the second equality follows by the Leibniz rule for $d_E$. Finally, for the integrability
$$
d_\Xi\circ d_\Xi(\xi)(e) = d_M^2(\xi(e)) - \xi(d_E^2(e)) = -\xi(e)\gamma_B - \xi(\gamma_A e) + \xi(e\gamma_B) = - (\xi \gamma_A)(e),
$$
by the integrability conditions for a curved module, Definition~\ref{def.mod}(b), and for a curved bimodule, Definition~\ref{def.bimod}(b). This proves statement (1). The second statement is proven by similar calculations.
\end{proof}

By Lemma~\ref{lem.ind}(2), since $(A^\bullet, d)$ is a curved $(\cA,\cA)$-bimodule any curved $\cA$-module $(M^\bullet, d_M)$ gives rise to a quasi-cohesive  module $(M^\bullet, d_{M\ot A})$.

\subsection{Based corings,  comodules and contramodules}\label{sub.base}
\begin{definition}\label{def.coring}
Let $A$ be an algebra. An {\em $A$-coring} is a comonoid in the monoidal category of $A$-bimodules, i.e.\ it is a triple $\cC = (C,\Delta,\eps)$, consisting of an $A$-bimodule $C$ and $A$-bilinear maps $\Delta:C\to C\ot_A C$, $\eps: C\to A$ such that
\begin{blist}
\item $\Delta$  is coassociative, i.e.\
$$
(\Delta\ot_A\id)\circ \Delta = (\id\ot_A \Delta)\circ \Delta; 
$$
\item $\Delta$ is counital, i.e.\ both $(\eps\ot_A\id)\circ \Delta$ and $(\id\ot_A\eps)\circ \Delta$ are identities on $C$.
\end{blist}
The map $\Delta$ is called the {\em comultiplication} of $\cC$ and $\eps$ is known as the {\em counit} of $\cC$
\end{definition}

Occasionally we use the Sweedler notation
$
\Delta(c) = \sum c\sw 1\ot c\sw 2,
$
 to denote the value of $\Delta$ at $c\in C$.
 
\begin{definition}\label{def.morphism}
Let $\cC = (C,\Delta_C,\eps_C)$ be an $A$-coring an  $\cD = (D,\Delta_D,\eps_D)$ be a  $B$-coring.  A morphism from $\cC$ to $\cD$ is a pair of maps $f_0: A\to B$, $f_1: C\to D$, such that
\begin{blist}
\item $f_0$ is an algebra map and $f_1$ is an $A$-bimodule map, so that they induce the map $T(f_0,f_1): T_A(C)\to T_B(D)$ of graded algebras as explained in Example~\ref{ex.semi-free};
\item $\eps_D \circ f_1 = f_0 \circ \eps_C$;
\item $T(f_0,f_1)\circ \Delta_C = \Delta_D\circ f_1$.
\end{blist}
The category of corings is denoted by $\crg$.
\end{definition}

\begin{definition}\label{def.base.cor}
A {\em based $A$-coring} is a pair $(\cC, x)$ consisting of an $A$-coring $\cC= (C,\Delta,\eps)$ and an element $x\in C$ such that $\eps(x)=1$, called a {\em base point} . The category with objects based corings and morphisms of corings as morphisms (no condition on the base point) will be denoted by $\bcrg$.
\end{definition}

\begin{lemma}\label{lem.split} Let $\cC= (C,\Delta,\eps)$  be an $A$-coring and write $C^+ := \ker \eps$. Base points of $\cC$ are in one-to-one correspondence with the left (resp.\ right) $A$-module decompositions $C = A\oplus C^+$.
\end{lemma}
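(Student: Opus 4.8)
The plan is to identify both sides of the claimed bijection with the single intermediate notion of a left $A$-linear section of the counit. Since $\eps\colon C\to A$ is $A$-bilinear, it is in particular a morphism of left $A$-modules, and by definition $\ker\eps = C^+$. Consequently, giving a left $A$-module decomposition $C = A\oplus C^+$ is the same datum as giving a left $A$-linear section $\sigma\colon A\to C$ of $\eps$ (equivalently, a left $A$-linear retraction onto $C^+$): the complementary summand is $\sigma(A)$, which $\eps$ carries isomorphically onto $A$. I would record at the outset that $\im\eps$ is a left ideal of $A$, so that the presence of such a section forces $1\in\im\eps$ and hence $\eps$ to be surjective, which is exactly what makes the summand genuinely a copy of $A$.

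First I would pass from a base point to a decomposition. Given $x\in C$ with $\eps(x)=1$, define $\sigma_x\colon A\to C$ by $\sigma_x(a)=ax$. Left $A$-linearity of $\sigma_x$ is immediate, and left $A$-linearity of $\eps$ together with $\eps(x)=1$ yields $\eps(\sigma_x(a)) = a\,\eps(x) = a$, so $\eps\circ\sigma_x = \id_A$. Hence $\sigma_x$ is a section and $C = \sigma_x(A)\oplus C^+ = Ax\oplus C^+$ is the associated left $A$-module decomposition. Conversely, from a decomposition, that is from a section $\sigma$, I would set $x:=\sigma(1)$; then $\eps(x) = \eps(\sigma(1)) = 1$, so $x$ is a base point.

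Finally I would check that these two assignments are mutually inverse. Starting from a base point $x$ and forming $\sigma_x$, one recovers $\sigma_x(1) = 1\cdot x = x$. Starting from a section $\sigma$ with $x=\sigma(1)$, left $A$-linearity of $\sigma$ gives $\sigma_x(a) = ax = a\,\sigma(1) = \sigma(a)$, so $\sigma_x=\sigma$ and the original decomposition is recovered. The right-handed statement is entirely symmetric, using $\sigma_x(a)=xa$ and right $A$-linearity of $\eps$.

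The only genuine content is the bookkeeping in the identification of decompositions with sections, namely verifying that $\eps$ restricts to an isomorphism from the complement of $C^+$ onto $A$; this reduces to the surjectivity of $\eps$, which follows since $\im\eps$ is a left ideal containing $1$. All remaining steps are routine verifications.
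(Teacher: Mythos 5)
Your proof is correct and follows essentially the same route as the paper: the paper splits via the explicit retraction $\pi_x^L(c)=c-\eps(c)x$ onto $C^+$ and recovers $x$ as $\sigma(1)$ for the section $\sigma$ of $\eps$, while you work with the section $\sigma_x(a)=ax$ throughout, which is the same splitting datum viewed from the other side. Your extra verification that the two assignments are mutually inverse, and the remark that a section forces $\im\eps$ to be a left ideal containing $1$ and hence $\eps$ surjective, are worthwhile details the paper leaves implicit.
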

\begin{proof}
Given $x\in C$ such that $\eps(x)=1$, define the left $A$-module map
$$
\pi_x^L : C\to C^+, \qquad c\mapsto c- \eps(c)x.
$$
This map is well-defined since $\eps(x)=1$ and it clearly splits the inclusion $C^+\hookrightarrow C$. Again, since $\eps(x)=1$, the whole of $Ax$ is in the kernel of $\pi_x^L$. Conversely, if $\pi_x^L(c) = 0$, then $c = \eps(c)x \in Ax$. Therefore $\ker \pi^L_x = Ax \cong A$.

For the right $A$-module decomposition the surjection $C\to C^+$ is given by
$$
\pi_x^R : C\to C^+, \qquad c\mapsto c- x\eps(c).
$$

Conversely, given a left $A$-module decomposition $C = A\oplus C^+$, define $x$ as the image of $1\in A$ under the section $A\to C$ of $\eps$. 
\end{proof}

\begin{lemma}\label{base.sur}
The category $\bcrg$ of based corings is equivalent to the full subcategory $\scrg$ of $\crg$ consisting of corings with a surjective counit.
\end{lemma}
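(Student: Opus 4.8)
The plan is to exhibit the evident forgetful functor $U:\bcrg\to\scrg$ that discards the base point, and to verify it is an equivalence by checking that it is well defined, fully faithful, and surjective on objects. On objects set $U(\cC,x)=\cC$, and on morphisms let $U$ act as the identity: by Definition~\ref{def.base.cor} a morphism of based corings is precisely a morphism of the underlying corings (no condition is imposed on base points), so there is nothing to assign.

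First I would check that $U$ is well defined, i.e.\ that the underlying coring of a based coring has surjective counit. This is the one genuinely substantive (though elementary) point, and the content of the lemma really rests on it: if $\eps(x)=1$, then $A$-bilinearity of $\eps$ gives $\eps(ax)=a\,\eps(x)=a$ for every $a\in A$, so $\eps$ is surjective and hence $U$ indeed lands in the full subcategory $\scrg$. Conversely, if $\cC=(C,\Delta,\eps)$ lies in $\scrg$, then surjectivity of $\eps$ produces some $x\in C$ with $\eps(x)=1$, so that $(\cC,x)$ is a based coring with $U(\cC,x)=\cC$. This shows $U$ is surjective on objects (on the nose, not merely up to isomorphism).

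Full faithfulness is then immediate. Since $\scrg$ is a \emph{full} subcategory of $\crg$, the morphisms $U(\cC,x)\to U(\cD,y)$ in $\scrg$ are exactly the coring morphisms $\cC\to\cD$; by Definition~\ref{def.base.cor} these coincide with the morphisms $(\cC,x)\to(\cD,y)$ in $\bcrg$. Thus $U$ is a bijection on hom-sets. A fully faithful functor that is surjective on objects is an equivalence of categories, which completes the argument. I do not expect any real obstacle here: the essential observation is simply that a base point and a surjective counit encode the same data—one as a chosen element, the other as a property—while the matching of morphisms is forced by the definition of $\bcrg$ as carrying the plain coring morphisms.
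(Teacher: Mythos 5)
Your proof is correct and follows essentially the same route as the paper: both exhibit the forgetful functor $(\cC,x)\mapsto\cC$ and observe that a base point exists if and only if the counit is surjective, with full faithfulness forced by the fact that morphisms in $\bcrg$ impose no condition on base points. The only (harmless) difference is that you establish surjectivity directly from $A$-bilinearity, $\eps(ax)=a\,\eps(x)=a$, where the paper instead invokes the decomposition $C=A\oplus C^+$ of Lemma~\ref{lem.split}.
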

\begin{proof}
The surjectivity of the counit in a coring is equivalent to the existence of a base point: If $\eps$ is surjective, then any $x\in \eps^{-1}(1)$ is a base point; if a base point exists, then surjectivity of $\eps$ follows by the direct sum decompositions in Lemma~\ref{lem.split}. Since the morphisms in $\bcrg$ do not fix base points, the forgeful functor 
$$
\bcrg \to \scrg, \qquad (\cC, x) \mapsto \cC,
$$
provides the required equivalence of categories.
\end{proof}

A representation of a coring is typically encoded in the familiar notion of a comodule.

\begin{definition}\label{def.comodule}
A right {\em comodule} over an $A$-coring $\cC= (C,\Delta,\eps)$ is a pair $(M,\roM)$, where $M$ is a right $A$-module with $A$-multiplication $\mu: M\ot A\to M$, and $\roM: M\to M\ot_A C$ is a right $A$-linear map such that  $(\roM\ot_A \id - \id\ot_A\Delta)\circ\roM$ vanishes on $M$, while $\mu\circ (\id\ot \eps)\circ \roM$ is the identity on $M$. 

A {\em morphism of $\cC$-comodules} $\varphi:  (M,\roM) \to (N,\roN)$ is a right $A$-linear map $\varphi: M\to N$, rendering commutative the following diagram:
$$
\xymatrix{ M \ar[rr]^-{\varphi} \ar[d]_{\roM} && N\ar[d]^{\roN}\\
M\ot_A C \ar[rr]^-{\varphi \ot \id} && N\ot_A C.}
$$
\end{definition}

Less familiar is the notion of a contramodule, which, after years of neglect, has been brought to the fore in a recent monograph \cite{Pos:hom}. Recall that given a right $A$-module $L$, any map of right $A$-modules $f: M\to N$ induces two maps between the hom-sets:
$$
\rhom A L f : \rhom ALM\to \rhom ALN, \qquad g \mapsto f\circ g,
$$
 and
 $$
\rhom A f L: \rhom AML\to \rhom ANL, \qquad g \mapsto g\circ f.
$$
Furthermore,  if $L$ is an $A$-bimodule, then $\rhom A L M$ is a right $A$-module by $(fa)(u) := f(au)$, for all $a\in A$, $u\in L$ and $f\in \rhom A L M$, and there is an adjunction isomorphism:  $\rhom A M {\rhom A L N} \cong \rhom A {M\ot_A L}  N$, $f\mapsto [m\ot l\mapsto f(m)(l)]$. Finally, note that $\rhom AAM \cong M$ by the evaluation at $1\in A$. With all these preliminaries at hand one can state the following definition.

\begin{definition}\label{def.contramodule} 
A right {\em contramodule} over an $A$-coring $\cC= (C,\Delta,\eps)$ is a pair $(M,\alpha)$, where $M$ is a right $A$-module and $\alpha: \rhom A C M\to M$ is a right $A$-linear map such that  $\alpha\circ (\rhom A C \alpha  - \rhom A\Delta M)$ vanishes on $\rhom A {C\ot_A C} M\cong \rhom A C {\rhom AC M}$, while $\alpha \circ \rhom A\eps M $ is the identity on $\rhom AAM \cong M$. 

A {\em morphism of $\cC$-contramodules} $\varphi:  (M,\alpha_M) \to (N,\alpha_N)$ is a right $A$-linear map $\varphi: M\to N$, rendering commutative the following diagram:
$$
\xymatrix{ \rhom A C M \ar[rr]^-{\rhom A C \varphi} \ar[d]_{\alpha_M} && \rhom A CN\ar[d]^{\alpha_N}\\
M \ar[rr]^-{\varphi} && N.}
$$

\end{definition}

Further information about corings can be found in \cite{BrzWis:cor}.

\section{Based corings and curved differential graded algebras}
\setcounter{equation}{0}
In this section an equivalence between the categories of semi-free curved differential algebras and corings with surjective counits is established.
\subsection{From corings to curved differential graded algebras...}
\begin{theorem}\label{thm.crg-cdga}
\begin{zlist}
\item Given an $A$-coring $\cC=(C,\Delta,\eps)$ and  $x\in C$,  define
\begin{blist}
\item for all $a\in A$, $d_x(a) = [x,a]$, and for all $c\in C^{\otimes_A n}$,
$$
d_x (c) = x\ot c + \sum_{k=1}^n(-1)^k \Delta_k^{(n)}(c) + (-1)^{n+1}c\ot x,
$$
where $\Delta_k^{(n)} = C^{\otimes_A k-1}\ot \Delta\ot C^{\otimes n-k} : C^{\otimes_A n} \to C^{\otimes_A n+1};
$
\item $\gamma_x = x\ot x - \Delta(x)$.
\end{blist}
Then $T^\flat(\cC,x) := (T_A(C), d_x, \gamma_x)$ is a curved differential graded algebra.
\item Let $\cC$ be an $A$-coring, let $\cD$ be a $B$-coring,  and let $x\in C$ and $y\in D$.   Given a morphism $(f_0, f_1)$ from $\cC$ to  $\cD$,
the pair $(T(f_0, f_1), f_1(x)-y)$ is a morphism $T^\flat(\cC,x) \to T^\flat(\cD,y)$ of curved differential graded algebras.
\item For a based $A$-coring $(\cC, x)$ and a based $B$-coring $(\cD,y)$, the assignment
$$
T : (\cC,x)\mapsto  (T_A(C^+), d_x, \gamma_x), \qquad (f_0, f_1)\mapsto (T(f_0, f_1)\mid_{T_A(C^+)}, f_1(x)-y),
$$ 
described in (1) and (2), defines a  functor from $\bcrg$ to $\cdga$. 
\end{zlist}
\end{theorem}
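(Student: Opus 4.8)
The plan is to treat the three parts in order, in each case reducing the identity to be proved to the algebra generators $A$ and $C$ of the tensor algebra by observing that both sides are (possibly twisted) derivations. For part~(1) I would first note that $d_x$ is well defined, since $\Delta$ is $A$-bilinear and the outer terms $x\ot(-)$ and $(-)\ot x$ are $A$-balanced. For the graded Leibniz rule~(a) only products of two generators need checking, and the sole nontrivial case reduces to a single cancellation: on concatenating $c\in C^{\ot_A m}$ with $c'\in C^{\ot_A n}$, the junction term $(-1)^{m+1}c\ot x\ot c'$ from $d_x(c)\ot c'$ cancels $(-1)^{m}c\ot x\ot c'$ from $(-1)^m c\ot d_x(c')$, while the surviving $\Delta$-insertions reassemble into $d_x(c\ot c')$. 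Once~(a) holds, both $d_x^2$ and $[\gamma_x,-]$ are \emph{even} derivations (the cross terms in $d_x^2(uv)$ cancel because $d_x$ is odd, and $|\gamma_x|=2$), so~(b) need be verified only on $A$ and on $C$; on $C$ the coassociativity cancellation making the two middle terms of $d_x(\Delta c)$ annihilate gives $d_x^2(c)=\gamma_x\ot c-c\ot\gamma_x=[\gamma_x,c]$, and on $A$ the identity follows from $A$-bilinearity of $\Delta$ together with the balanced relation $xa\ot x=x\ot ax$. The Bianchi identity~(c) is the same coassociativity cancellation applied to $\gamma_x=x\ot x-\Delta(x)$.

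For part~(2), set $F=T(f_0,f_1)$ and $\omega=f_1(x)-y$ and check the two conditions of Definition~\ref{def.cat}. Condition~(a) states $F\circ d_x=d_y\circ F+[\omega,F(-)]$; both sides are $F$-twisted odd derivations $T_A(C)\to T_B(D)$, so it suffices to compare them on $A$ and on $C$, where the only coring-morphism axiom used is the comultiplication compatibility $F\circ\Delta_C=\Delta_D\circ f_1$ (turning $F(\Delta_C c)$ into $\Delta_D(f_1 c)$); the $y$-terms from $d_y$ then cancel against the $y$-part of $\omega$. Condition~(b), $F(\gamma_x)=\gamma_y+d_y\omega+\omega^2$, is a direct expansion using the same compatibility, after which all $y^2$-, $\Delta_D(y)$-, $yf_1(x)$- and $f_1(x)y$-terms cancel, leaving $f_1(x)^2-\Delta_D(f_1(x))$ on both sides.

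For part~(3) the genuinely new ingredient is the passage to $T_A(C^+)$, and this is where the base point hypothesis $\eps(x)=1$ is indispensable. Using the splitting $C=A\oplus C^+$ of Lemma~\ref{lem.split}, I would first record that $(C^+)^{\ot_A 2}=\ker(\eps\ot\id)\cap\ker(\id\ot\eps)$ inside $C\ot_A C$. Then $\eps(xa-ax)=\eps(x)a-a\eps(x)=0$ gives $d_x(A)\subseteq C^+$, and applying $\eps\ot\id$ and $\id\ot\eps$ (each time using $\eps(x)=1$ and counitality) to $\gamma_x$ and to $d_x(c)$ for $c\in C^+$ gives $\gamma_x\in(C^+)^{\ot_A 2}$ and $d_x(C^+)\subseteq(C^+)^{\ot_A 2}$; since $d_x$ is a derivation, $T_A(C^+)$ is therefore closed under $d_x$, so $(T_A(C^+),d_x,\gamma_x)$ is a curved differential graded algebra by part~(1). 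On morphisms, the counit condition $\eps_D\circ f_1=f_0\circ\eps_C$ yields $f_1(C^+)\subseteq\ker\eps_D$ and, with $f_0(1)=1$ and the two base points, $\omega=f_1(x)-y\in\ker\eps_D$, so the restricted pair is a $\cdga$-morphism by part~(2). Functoriality is then formal: the identity of $(\cC,x)$ maps to $(\id,\,x-x)=(\id,0)$, and for a further morphism $(g_0,g_1)$ the composition rule of $\cdga$ gives $g_1(f_1(x)-y)+(g_1(y)-z)=g_1f_1(x)-z$, which is exactly the curvature shift attached to the composite coring morphism $(g_0f_0,g_1f_1)$.

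I expect the main obstacle to lie not in the sign bookkeeping of~(1)--(2), which the reduction to generators keeps short, but in the closure step of~(3): verifying that the formulae for $d_x$ and $\gamma_x$ actually take values in $T_A(C^+)$. This is precisely where the base point is used, and it requires care because the decomposition $C=A\oplus C^+$ is only one-sided, so the identification of $(C^+)^{\ot_A 2}$ with the intersection of the two counit kernels has to be argued rather than assumed.
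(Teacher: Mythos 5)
Your proposal is correct and follows essentially the same route as the paper's proof: the same junction-term cancellation $(-1)^{m+1}c\ot x\ot c'$ versus $(-1)^{m}c\ot x\ot c'$ for the Leibniz rule, the same coassociativity cancellations for $d_x^2=[\gamma_x,-]$ and the Bianchi identity, the same use of the coring-morphism axioms in part (2), and the same counit/base-point reasoning for closure and for $f_1(x)-y\in D^+$ in part (3). The only differences are organizational: you reduce the curvature identity to the generators $A$ and $C$ by noting that $d_x^2$ and $[\gamma_x,-]$ are even derivations (and thereby also check degree zero, which the paper leaves implicit), and you justify closure via $(C^+)^{\ot_A 2}=\ker(\eps\ot\id)\cap\ker(\id\ot\eps)$ — correctly flagging that this needs the two one-sided splittings of Lemma~\ref{lem.split} — where the paper instead records the equivalent explicit formula $d_x(c)=\sum_{k=1}^n(-1)^k c^1\ot\cdots\ot\bigl((\pi_x^R\ot\pi_x^L)\circ\Delta(c^k)\bigr)\ot\cdots\ot c^n$ on $(C^+)^{\ot_A n}$.
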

\begin{proof}
(1) Take any $c\in C^{\otimes_A m}$ and  $c'\in C^{\otimes_A n}$. Then
\begin{eqnarray*}
d_x(c\ot c') &=& x\ot c\ot c' + \sum_{k=1}^{m+n}(-1)^k \Delta_k^{(m+n)}(c\ot c') + (-1)^{m+n+1}c\ot c'\ot x\\
&=& x\ot c\ot c' + \sum_{k=1}^{m}(-1)^k \Delta_k^{(m)}(c)\ot c' + (-1)^{m+1}c\ot  x \ot c'\\
&&+ (-1)^{m}(c\ot  x \ot c' + \sum_{k=1}^{n}(-1)^k c\ot \Delta_k^{(n)}(c') + (-1)^{n+1}c\ot c'\ot x)\\
&=& d_x(c)\ot c' + (-1)^m c\ot d_x(c'),
\end{eqnarray*}
so $d_x$ is a graded derivation as required. Applying $d_x$ twice to $c\in  C^{\otimes_A n}$ one easily finds that due to the sign alteration and the coassociativity of $\Delta$ all terms involving $\Delta_k^{(n)}(c)$ cancel out and one is only left with the first two and the last two terms, so that
$$
d_x\circ d_x (c) = x\ot x\ot c - \Delta(x)\ot c + c\ot \Delta(x) - c\ot x\ot x = [\gamma_x, c],
$$
as required. The Bianchi identity follows by the coassociativity, since
\begin{eqnarray*}
d_x(\gamma_x) &=& x\ot x\ot x - \Delta(x)\ot x + x\ot \Delta(x) - x\ot x\ot x\\
&& - x\ot \Delta(x) + (\Delta\ot \id)\circ \Delta(x) - (\id\ot \Delta)\circ \Delta(x) + \Delta(x)\ot x = 0.
\end{eqnarray*}
This completes the proof that $T^\flat(\cC,x)$ is a curved differential graded algebra.

(2) As explained in Example~\ref{ex.semi-free}, $T(f_0,f_1)$ is a morphism of graded algebras $T_A(C)\to T_B(D)$, so only conditions (a) and (b) in Definition~\ref{def.cat} need be checked. Set $\omega = f_1(x) -y$. Then, for all $a\in A$,
\begin{eqnarray*}
T(f_0,f_1)(d_x a) &=& f_1([x,a]) = [f_1(x),f_0(a)] \\
&=& [y,f_0(a)] + [\omega, f_0(a)] = 
d_yT(f_0,f_1)(a) + [\omega, T(f_0,f_1)(a)].
\end{eqnarray*}
Furthermore, for all $c\in C^{\ot_A  n}$,
\begin{eqnarray*}
T(f_0,f_1)(d_x c) &=& T(f_0,f_1)(x\ot c) + \sum_{k=1}^n(-1)^k T(f_0,f_1)(\Delta_k^{(n)}(c))\\
&& + (-1)^{n+1}T(f_0,f_1)(c\ot x)\\
&=& d_y T(f_0,f_1)(c) + \omega \ot_B T(f_0,f_1)(c) +(-1)^{n+1}T(f_0,f_1)(c)\ot_B \omega\\
&=& d_y T(f_0,f_1)(c) + [\omega , T(f_0,f_1)(c)],
\end{eqnarray*}
where the second equality follows by the fact that $(f_0,f_1)$ is a morphism of corings.
Finally,
\begin{eqnarray*}
T(f_0,f_1)(\gamma_x) &=& f_1(x)\ot f_1(x) - \sum f_1(x\sw 1)\ot f_1(x\sw 2)\\
& =& y\ot y +y\ot  \omega + \omega\ot  y +\omega\ot  \omega - \sum f_1(x)\sw 1\ot  f_1(x)\sw 2 \\
&=& y\ot y +y\ot \omega + \omega\ot y +\omega\ot \omega - \sum y\sw 1\ot y\sw 2 - \sum \omega\sw 1\ot \omega \sw 2\\
&=& \gamma_y +d_y\omega +\omega^2,
\end{eqnarray*}
where the second equality follows by the fact that $(f_0,f_1)$ is a morphism of corings. Therefore, $(T(f_0,f_1), f_1(x) -y)$ is a morphism of curved differential graded algebras as claimed.

(3) Since $\eps(x)=1$, $d_x(a) \in C^+$, for all $a\in A$. An easy calculation that uses properties of the counit and the definition of maps $\pi_x^L$ and $\pi_x^R$ in the proof of Lemma~\ref{lem.split}, yields the following formula, for all $c= c^1\ot_A\ldots c_n\in {C^+}^{\ot_A n}$,
$$
d_x (c) =\sum_{k=1}^n(-1)^k c^1\ot \ldots c^{k-1}\ot \left((\pi_x^R\ot \pi_x^L)\circ\Delta(c^{k})\right)\ot c^{k+1}\ot \ldots \ot c^n.
$$
This means that $d_x$ restricts to $T_A(C^+)$. By the same token, $\gamma_x = x\ot x - \Delta(x)\in C^+\ot_A C^+$. Hence $T(\cC,x)$ is a curved differential graded algebra. Finally, since for a morphism of corings $\eps_D\circ f_1 = f_0\circ \eps_C$ and $\eps_D(y) =1$, $\eps_C(x) =1$, we conclude that $f_1(x) -y\in D^+$.

Statements (1) and (2) establish that $T$ is a mapping from $\bcrg$ to $\cdga$. To check whether $T$ preserves the composition, take morphisms $(f_0,f_1): (\cC,x)\to  (\cD,y)$ and $(g_0,g_1): (\cD,y)\to  (\cE,z)$ and compute
\begin{eqnarray*}
T\left((g_0,g_1)\circ (f_0,f_1)\right) &=& (T(g_0\circ f_0,g_1\circ f_1), g_1\circ f_1(x) -z)\\
&=& (T(g_0,g_1)\circ T(f_0,f_1), g_1( f_1(x)-y) + g_1(y) -z),
\end{eqnarray*}
as required.
\end{proof}

\begin{corollary}\label{cor.trivial} For any choice of $x, y\in C$,  $T^\flat(\cC,x)$ and $T^\flat(\cC,y)$ are isomorphic as curved  differential graded algebras. Consequently, for any $x$, $T^\flat(\cC,x)$ is isomorphic to a differential graded algebra.
\end{corollary}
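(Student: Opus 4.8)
The plan is to obtain the whole corollary as a direct application of Theorem~\ref{thm.crg-cdga}(2) to the \emph{identity} morphism of the coring $\cC$, together with the composition law of Definition~\ref{def.cat}.

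First I would instantiate Theorem~\ref{thm.crg-cdga}(2) with $\cD = \cC$ and with the coring morphism $(f_0,f_1) = (\id_A, \id_C)$; this is indeed a morphism of corings, since $\id_A$ is an algebra map, $\id_C$ is an $A$-bimodule map, and conditions (b), (c) of Definition~\ref{def.morphism} hold trivially for the identity. Because $T(\id_A, \id_C) = \id_{T_A(C)}$, the theorem yields, for all $x, y \in C$, a morphism of curved differential graded algebras
$$
(\id_{T_A(C)},\, x-y)\colon T^\flat(\cC, x) \longrightarrow T^\flat(\cC, y).
$$
I would then verify that this morphism is invertible with inverse $(\id_{T_A(C)}, y-x)$: by the composition rule $(f',\omega')\circ(f,\omega) = (f'\circ f, f'(\omega)+\omega')$ of Definition~\ref{def.cat}, the two composites are $(\id, (x-y)+(y-x)) = (\id, 0)$ and $(\id, (y-x)+(x-y)) = (\id,0)$, which are the respective identity morphisms. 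This establishes $T^\flat(\cC,x) \cong T^\flat(\cC,y)$ and proves the first assertion.

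For the second assertion I would simply take $y=0$. Since $0\in C$ is an admissible choice in Theorem~\ref{thm.crg-cdga}(1), the triple $T^\flat(\cC,0) = (T_A(C), d_0, \gamma_0)$ is a curved differential graded algebra, and its curvature is $\gamma_0 = 0\ot 0 - \Delta(0) = 0$. Hence $T^\flat(\cC,0)$ is a genuine (non-curved) differential graded algebra, and the isomorphism $T^\flat(\cC,x) \cong T^\flat(\cC,0)$ from the first part gives the claim. Concretely the twisting element realising this isomorphism is $\omega = x - 0 = x$, and one checks directly that the twisted differential $d_0 = d_x - [x,-]$ restricts on $C$ to $-\Delta$.

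I do not expect a serious obstacle here: once Theorem~\ref{thm.crg-cdga}(2) is available, the corollary is essentially bookkeeping. The only points requiring care are that the candidate inverse $(\id, y-x)$ is again a legitimate morphism of the prescribed form and that composition in $\cdga$ merely adds the twisting parts when $f=f'=\id$, so that both composites collapse to the identity $(\id,0)$; and, for the second statement, the trivial but decisive observation that the curvature $\gamma_0$ vanishes for the zero element, which is exactly what frees the construction from requiring a group-like element.
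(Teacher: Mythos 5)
Your proposal is correct and follows essentially the same route as the paper: the paper likewise applies Theorem~\ref{thm.crg-cdga}(2) to the identity morphism of $\cC$ to obtain the isomorphism $(\id, x-y)\colon T^\flat(\cC,x)\to T^\flat(\cC,y)$, and then sets $y=0$ so that $\gamma_0=0$ makes $T^\flat(\cC,0)$ an honest differential graded algebra. Your extra verifications (invertibility via the composition law, and that $d_0$ restricts to $-\Delta$ on $C$) are accurate bookkeeping that the paper leaves implicit.
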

\begin{proof} 
The identity isomorphism of corings $\cC\to \cC$ induces the required isomorphism
$$
(\id, x-y) : T^\flat(\cC,x)\to T^\flat(\cC,y), 
$$
of curved differential graded algebras. If $y=0$, then  the curvature  element $\gamma_y$ is zero, hence $T^\flat(\cC,y)$ is an ordinary differential graded algebra (such that $A\subseteq \ker d_0$), and any $T^\flat(\cC,x)$ is thus isomorphic to a differential graded algebra. 
\end{proof}

\begin{corollary}\label{cor.group} For any choice of base points $x, y\in C$,   $T(\cC,x)$ and $T(\cC,y)$ are isomorphic as curved differential graded algebras. In particular, if $\cC$ has a group-like element, then these are isomorphic (as curved differential graded algebras) to  differential graded algebras.
\end{corollary}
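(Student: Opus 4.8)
The plan is to deduce both assertions from the functoriality established in Theorem~\ref{thm.crg-cdga}(3), in exact parallel with the argument for $T^\flat$ given in Corollary~\ref{cor.trivial}. The crucial preliminary observation is that $C^+ = \ker\eps$ does not depend on the chosen base point, so the underlying graded algebra $T_A(C^+)$ is literally the \emph{same} for $T(\cC,x)$ and $T(\cC,y)$; only the differential and the curvature element change, from $(d_x,\gamma_x)$ to $(d_y,\gamma_y)$. Moreover, since $\eps(x)=\eps(y)=1$ we have $\eps(x-y)=0$, so $x-y\in C^+$ is a genuine degree-one element of $T_A(C^+)$, which is precisely what is needed for it to serve as the $\omega$-component of a morphism in $\cdga$.

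First I would regard the identity morphism of corings $(\id_A,\id_C)$ as a morphism $(\cC,x)\to(\cC,y)$ in $\bcrg$; this is permitted because, by Definition~\ref{def.base.cor}, morphisms in $\bcrg$ impose no condition on base points. Applying the functor $T$ to it yields, by the explicit formula in Theorem~\ref{thm.crg-cdga}(3), the pair
$$
\bigl(T(\id_A,\id_C)\mid_{T_A(C^+)},\ \id_C(x)-y\bigr) = (\id,\, x-y) : T(\cC,x)\to T(\cC,y).
$$

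To see that this is an isomorphism, I would note that $(\id_A,\id_C)$ is already an isomorphism in $\bcrg$: its inverse is the same identity coring morphism read as $(\cC,y)\to(\cC,x)$. Since functors preserve isomorphisms, $T$ carries it to an isomorphism of curved differential graded algebras. Equivalently, one checks directly from the composition rule in Definition~\ref{def.cat} that $(\id,x-y)$ and $(\id,y-x)$ compose, in either order, to $(\id,\,(x-y)+(y-x))=(\id,0)$, the identity morphism. This settles the first assertion.

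For the second assertion, suppose $\cC$ has a group-like element $g$, i.e.\ $\eps(g)=1$ and $\Delta(g)=g\ot g$. Then $g$ is a base point, and its curvature is $\gamma_g = g\ot g - \Delta(g) = 0$, so $T(\cC,g) = (T_A(C^+),d_g,0)$ is an ordinary (non-curved) differential graded algebra. By the first assertion every $T(\cC,x)$ is isomorphic to $T(\cC,g)$, hence to a differential graded algebra. Because all the substantive verification is already packaged into Theorem~\ref{thm.crg-cdga}, there is no genuine computational obstacle; the only points demanding a little care are confirming that the $\omega$-component $x-y$ lands in $C^+$ and that applying $T$ to the identity coring morphism returns precisely $(\id,x-y)$.
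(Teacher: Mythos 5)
Your proof is correct and takes essentially the same approach as the paper: the paper likewise obtains the isomorphism $(\id,x-y):T(\cC,x)\to T(\cC,y)$ by restricting the isomorphism of Corollary~\ref{cor.trivial} (equivalently, applying the functor $T$ of Theorem~\ref{thm.crg-cdga}(3) to the identity coring morphism viewed in $\bcrg$), and settles the group-like case by noting that $\gamma_g = g\ot g - \Delta(g) = 0$ for a group-like base point $g$. Your added checks --- that $x-y\in C^+$ and that $(\id,x-y)$ and $(\id,y-x)$ compose to $(\id,0)$ under the composition rule of Definition~\ref{def.cat} --- merely make explicit what the paper leaves implicit.
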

\begin{proof} 
The  isomorphism  described in the proof of Corollary~\ref{cor.trivial} restricts to the isomorphism of curved differential graded algebras  $T(\cC,x)\to T(\cC,y)$.  In particular, if, say, $y$ is a group-like element in $\cC$, then $y$ is a base point and $\Delta(y) = y\ot y$, hence $\gamma_y =0$ and  $T(\cC,y)$ is a differential graded algebra. 
\end{proof}

\begin{proposition}\label{prop.comod}
Let $\cC = (C,\Delta, \eps)$ be an $A$-coring and let $x\in C$.  Given a complex $\delta^l :(M^l,\varrho_l)\to (M^{l+1},\varrho_{l+1})$ of  $\cC$-comodules, define
$$
\cov^{n,l} : M^l\to M^{l-n+1}\ot_A C^{\otimes_A n}, \qquad 
\cov^{n,l} = \begin{cases} \delta^l & \mbox{for $n =0$} \\
m\mapsto (-1)^l \varrho_l(m) - (-1)^lm\ot x, & \mbox{for $n =1$}\\
0  & \mbox{for $n\geq 2$}.
\end{cases}
$$ 
Then $(M^\bullet, \cov)$ is an integrable $\ZZ$-connection for the curved differential graded algebra $T^\flat(\cC,x)$. Furthermore, if $\eps(x)=1$, then $( M^\bullet, \cov)$ is an integrable $\ZZ$-connection for the curved differential graded algebra $T(\cC,x)$ associated to the based $A$-coring $(\cC,x)$.
\end{proposition}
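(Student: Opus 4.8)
The plan is to verify the two conditions of Example~\ref{ex.connection} for the data $\{\cov^{n,l}\}$. As recorded there, prescribing the component maps $\cov^{k,n}\colon M^k\to M^{k-n+1}\ot_A C^{\ot_A n}$ is the same as prescribing a $\ZZ$-connection: condition (a), the Leibniz rule, is built into the reconstruction of $\cov$ from its restriction to $M^\bullet = M^\bullet\ot_A A^0$, and the assignment is degree-one since the target $M^{l-n+1}\ot_A A^n$ sits in total degree $(l-n+1)+n=l+1$. Thus the whole content is the integrability condition (b), namely $\cov\circ\cov(\xi) = -\xi\gamma_x$ for all $\xi\in M^\bullet\ot_A A^\bullet$.

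First I would reduce (b) to the submodule $M^\bullet$. Writing an arbitrary $\xi$ as a sum of elements $ma$ with $m\in M^\bullet$ and $a\in A^\bullet$, the Leibniz rule gives, after the cross terms cancel, $\cov^2(ma) = \cov^2(m)\,a + m\,d_x^2(a)$; since $d_x^2(a) = [\gamma_x,a] = \gamma_x a - a\gamma_x$ (the sign being trivial as $|\gamma_x|=2$), this equals $\cov^2(m)\,a - ma\gamma_x + m\gamma_x a$. Hence $\cov^2(\xi) = -\xi\gamma_x$ for all $\xi$ as soon as $\cov^2(m) = -m\gamma_x$ for every $m\in M^\bullet$, and it is this identity I would establish by direct computation.

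For the computation I would expand $\cov(m) = \delta^l(m) + (-1)^l\varrho_l(m) - (-1)^l m\ot x$ and apply $\cov$ once more, using the Leibniz rule on the two summands lying in $M^l\ot_A C$. Four ingredients enter: the complex condition $\delta^{l+1}\circ\delta^l = 0$; the comodule-morphism property $\varrho_{l+1}\circ\delta^l = (\delta^l\ot\id)\circ\varrho_l$ of Definition~\ref{def.comodule}; the comodule coassociativity $(\varrho_l\ot\id)\circ\varrho_l = (\id\ot\Delta)\circ\varrho_l$; and the derivative values $d_x(c) = x\ot c - \Delta(c) + c\ot x$ on $c\in C$, in particular $d_x(x) = x\ot x + \gamma_x$. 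A careful expansion then shows that the mixed terms $m^{(0)}\ot x\ot m^{(1)}$ and the two occurrences of $m\ot x\ot x$ cancel by the Leibniz signs; the terms involving $\delta$, namely $\delta(m^{(0)})\ot m^{(1)}$ (produced via the morphism property) and $\delta(m)\ot x$, cancel in pairs using $\delta^2=0$; and the terms $(m^{(0)})^{(0)}\ot (m^{(0)})^{(1)}\ot m^{(1)}$ and $m^{(0)}\ot\Delta(m^{(1)})$ cancel by coassociativity, leaving precisely $-m\ot\gamma_x = -m\gamma_x$. The main labour here—and the only place to slip—is the sign bookkeeping together with the correct matching of the coassociativity and morphism identities; everything else is formal.

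For the final assertion I would first check that $\cov^{1,l}$ takes values in $M^l\ot_A C^+$. Applying $\id\ot\eps$ to $\varrho_l(m) - m\ot x$ and using the counit axiom $\mu\circ(\id\ot\eps)\circ\varrho_l = \id$ together with $\eps(x)=1$ gives $m-m=0$, so $\varrho_l(m) - m\ot x\in\ker(\id\ot\eps) = M^l\ot_A C^+$, the last equality being the decomposition of Lemma~\ref{lem.split}. Consequently the same family $\{\cov^{n,l}\}$ defines a degree-one endomorphism of $M^\bullet\ot_A T_A(C^+)$, that is, a $\ZZ$-connection for $T(\cC,x)$; and since $\gamma_x\in C^+\ot_A C^+$ and $d_x$ restricts to $T_A(C^+)$ by Theorem~\ref{thm.crg-cdga}(3), the integrability identity just proved restricts to this subspace, yielding $\cov^2(m) = -m\gamma_x$ there as well.
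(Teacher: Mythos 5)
Your overall route coincides with the paper's: both arguments build the connection from its components as in Example~\ref{ex.connection} and then verify integrability. Your only real deviation is that, instead of the paper's direct computation of $\cov\circ\cov(m\ot c)$ for general $c\in C^{\ot_A n}$ (where all the $\varrho_l$- and $\Delta^{(n)}_k$-terms cancel by coassociativity and the alternating signs), you first reduce integrability to $M^\bullet$ via the Leibniz rule together with $d_x\circ d_x(a)=[\gamma_x,a]$, and then compute $\cov^2(m)$ on degree-zero elements only. That reduction is sound (the cross terms do cancel as you claim, and the sign in $[\gamma_x,a]$ is indeed trivial), your degree-zero computation --- including $d_x(x)=x\ot x+\gamma_x$ and the three families of cancellations --- matches what the paper obtains, and your treatment of the based case is actually more explicit than the paper's ``clearly'': identifying the image of $\cov^{1,l}$ inside $M^l\ot_A C^+$ via the counit and Lemma~\ref{lem.split}, and restricting $d_x$ and $\gamma_x$ via Theorem~\ref{thm.crg-cdga}(3), is exactly the right justification.

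There is, however, one glossed step which is not automatic. You assert that prescribing the component maps ``is the same as prescribing a $\ZZ$-connection'' with the Leibniz rule ``built into the reconstruction''. Example~\ref{ex.connection} only says that a $\ZZ$-connection is equivalent to a family of \emph{suitable} maps: the $\cov^{n\neq 1}$ must be $A$-linear (true here because the $\delta^l$ are comodule, hence $A$-module, maps --- worth stating), and $(-1)^l\cov^{1,l}$ must be a genuine covariant derivative over $A=A^0$. This last condition is precisely the paper's first step:
$$
\cov^{1,l}(ma) \;=\; (-1)^l\varrho_l(m)a-(-1)^l m\ot ax\;=\;\cov^{1,l}(m)a+(-1)^l m\ot d_x(a),
$$
which uses the right $A$-linearity of $\varrho_l$ and the identity $ax=xa-[x,a]=xa-d_x(a)$ in $C$ regarded as $A^1$ of $T^\flat(\cC,x)$; it holds because of the specific form of $\cov^{1,l}$ and would fail for an arbitrary choice of component maps. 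It cannot be waved away: it is what makes your extended $\cov$ well defined on the tensor product $M^\bullet\ot_A T_A(C)$ over $A$ in the first place, and your reduction step invokes the Leibniz rule for exactly such degree-zero scalars, so without this check that step is circular. The gap is repaired by the one-line computation displayed above, after which your proof is complete and in substance that of the paper.
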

\begin{proof}
For any $a\in A$ and $m\in M^l$,
\begin{eqnarray*}
\cov^{1,l}(ma) &=& (-1)^l\varrho_l(ma) - (-1)^lma\ot x\\
& =& (-1)^l\varrho_l(m)a - (-1)^lm\ot x a + (-1)^lm\ot [x,a]\\
& = & \cov^{1,l}(m)a + (-1)^lm\ot d_x(a),
\end{eqnarray*}
hence $(-1)^l\cov^1$ satisfies the graded Leibniz rule on $M^l$. Since the $\delta^l$ are right $A$-linear, the sum $\sum_n \cov ^n = \cov^0 + \cov ^1$  extends to a covariant derivative on $M^\bullet\ot_A T_A(C)^\bullet$; see Example~\ref{ex.connection}. Explicitly, in view of the definition of $d_x$ in Theorem~\ref{thm.crg-cdga}(1)(a) one obtains, for all $c\in C^{\ot_An}$ and $m\in M^l$,
$$
\cov(m\ot c) = \delta^l(m)\ot c + (-1)^l  \varrho_l(m)\ot c + \sum_{k=1}^n(-1)^{k+l} m\ot \Delta_k^{(n)}(c) - (-1)^{l+n}m\ot c\ot x.
$$
Applying $\cov$ to the above formula, and taking into account the coassociativity of the $\varrho_l$ and $\Delta$  as well as the alternating signs, one finds that all the terms involving $\varrho_l(m)$ and $\Delta_k^{(n)}(c) $ cancel out, except for
\begin{eqnarray*}
\cov\circ\cov (m\ot c) &=& \delta^{l+1}\circ \delta^l(m) \ot c + (-1)^l\left(\left(\delta^l\ot \id\right)\circ \varrho_l -  \varrho_{l+1} \circ \delta^l\right)(m)\ot c \\
&&+ m\ot c\ot \Delta(x) - m\ot c \ot x\ot x.
\end{eqnarray*}
The first term in the above formula vanishes, since the sequence $\delta^l$ forms a complex, the second term vanishes by the colinearity of the $\delta^l$. Consequently, one is left with $\cov\circ\cov (m\ot c)= -m\ot c \ot \gamma_x$. Therefore, $(M, \cov)$ is an integrable $\ZZ$-connection for the curved differential graded algebra $T^\flat(\cC,x)$. 

Clearly, if $x$ is a base point, then $\cov^{1,l} (M^l)\subseteq M^l\ot_A \cC^+$, so, by the same arguments as before, $(M^\bullet, \cov)$ is an integrable $\ZZ$-connection for the curved differential graded algebra $T(\cC,x)$.
\end{proof}

\begin{example}\label{ex.entw}
Recall from \cite{BrzMaj:coa} that an {\em entwining structure} is a triple $(A,C,\psi)$ consisting of an  algebra $A$ (with product $\mu: A\ot A\to A$ and unit $\iota: \KK\to A$), a coalgebra $C$ (with comultiplication $\Delta: C\to C\ot C$ and counit $\eps: C\to \KK$) and a $\KK$-linear map $\psi: C\ot A\to A\ot C$ making the following {\em bow-tie} diagram commute
$$
\xymatrix{
& C\ot  A\ot  A \ar[ddl]_{\psi\otimes \id} 
\ar[dr]^{\id\otimes \mu}  &  &             
C\ot  C\ot  A \ar[ddr]^{\id\otimes\psi}& \\
& & C\ot  A \ar[ur]^{\Delta\otimes \id} 
\ar[dr]^{\eps\ot \id}\ar[dd]^{\psi}  & &\\
A\ot C\ot  A \ar[ddr]_{\id\otimes\psi} & C \ar[ur]^{\id\otimes 
\iota} \ar[dr]_{\iota\otimes \id}& &A  & C\ot  A\ot  C 
\ar[ddl]^{\psi\otimes \id}\\
& &  A\ot  C \ar[ur]_{\id\otimes\eps} \ar[dr]_{\id\otimes\Delta} & &\\
& A\ot  A\ot  C \ar[ur]_{\mu\otimes \id} &  & A\ot C\ot  C & . }
$$
As explained in \cite{Brz:str} to an entwining structure one associates an $A$-coring $\cC(A,C,\psi) = (A\ot C, \Delta_{A\ot C}, \eps_{A\ot C})$, where  the $A$-multiplications on  $A\ot C$ are given by
$$
a(a'\ot c) a'' := aa'\psi(c\ot a''),
$$
and with structure maps
$
\Delta_{A\ot C} = \id\ot \Delta $ and $ \eps_{A\ot C} = \id\ot \eps$. If $A$ is flat as a $\KK$-module, then $\ker \eps_{A\ot C} = A\ot C^+$, where $C^+$ stands for the kernel of $\eps$. In this case $T_A((A\ot C)^+)^n = A\ot {C^+}^{\ot n}$. Take any $e\in C$ such that $\eps(e)=1$ and $x=1\ot e$. Then, for all $a\in A$, $c\in C^+$,
$$
d_x(a) = \psi(e\ot a) - a\ot e, \quad d_x(a\ot c) = \psi(e\ot a) \ot c - a\ot \Delta(c) + a\ot c\ot e.
$$

If $(V,\roV)$ is a right $C$-comodule, then $V\ot A$ is a $\cC(A,C,\psi)$-comodule with the obvious right $A$-multiplication and with the coaction given as the composite
$$
\xymatrix{ \varrho_0 : V\ot A \ar[rr]^-{\roV\ot \id} && V\ot C\ot A \ar[rr]^-{\id \ot \psi} && V\ot A\ot C \cong V\ot A \ot_A( A\ot C)}.
$$
In particular $V\ot A$ is a $C$-comodule. Iterating this procedure one obtains a family of $\cC(A,C,\psi)$-comodules $(M^l := V\ot A^{\ot l+1}, \varrho_l)$, $l\in \NN$. For each $l\in \NN$ and $k = 0, \ldots , l$, define right $A$-module maps
$$
\iota^{(l)}_k:  A^{\ot l+1} \to A^{\ot l+2}, \qquad  \iota^{(l)}_k = {A^{\ot k}}\ot \iota \ot  {A^{\ot l+1-k}}.
$$
These give rise to a complex of $A$-modules
$$
\delta^l: V\ot A^{\ot l+1}\to V\ot A^{\ot l+2}, \qquad \delta^l = \sum_{k=0}^l (-1)^k V\ot \iota^{(l)}_k.
$$
Since the unit map $\iota: \KK\to A$ is preserved by $\psi$ (this is the meaning of the left triangle in the bow-tie diagram), all $\delta^l$ are right $\cC(A,C,\psi)$-comodule maps, and we obtain integrable $\ZZ$-connections as described in Proposition~\ref{prop.comod}.
\end{example}

\begin{example}\label{ex.comatrix}
Let $A$ be an algebra. To any finitely generated projective right $A$-module $P$ one can associate a coring, known as a {\em comatrix coring} \cite{ElKGom:com}: Take any subalgebra $B$ of the endomorphism ring $S = \rend A P$. Then $P$ is a $(B,A)$-bimodule with the $B$-multiplication $b p = b(p)$, for all $b\in B$, $p\in P$. Let $P^* := \rhom A PA$ be the dual $(A,B)$-bimodule. Since $P$ is a fintely generated projective module the map 
$$
\Theta: P\ot _A P^* \to S, \qquad p\ot \chi \mapsto [q\mapsto p\chi(q)],
$$
is an $S$-bimodule isomorphism. Denote by $e = \Theta^{-1}(1_S)$ the dual basis element. Then $\cC = (C,\Delta, \eps)$, where $C= P^*\ot_B P$, 
$$
\Delta : \chi \ot p\mapsto \chi \ot e \ot p, \qquad \eps: \chi\ot p \mapsto \chi(p),
$$
is an $A$-coring. Since the counit is the evaluation map, $\eps$ is surjective if and only if the right $A$-module $P$ is a generator (and since it is also finitely generated and projective, $P$ is a progenerator). In this case, for any $x = \sum_i \xi_i \ot x_i\in \eps^{-1}(1_A) \subseteq P^*\ot_B P$, the curved differential graded algebra $T(\cC, x)$ can be described as follows. 

First, in view of the isomorphism $\Theta$, the tensor algebra $T_A(C)$ is
$$
T_A(C)^0 = A  \quad  \mbox{and} \quad T_A(C)^n = P^*\ot_B S^{\ot _B n-1} \ot_B P, \quad \mbox{for $n\geq 1$}, 
$$
with the product $(\chi \ot s \ot p)(\chi'\ot s'\ot p') = \chi \ot s \ot \Theta(p\ot \chi') \ot s' \ot p'$. For all $n\in \NN$ and $i = 0, \ldots ,n$,
define $\Phi_0^{(0)} = \eps : P^*\ot_B P\to A$, and, for a positive $n$, 
$$
\Phi_i^{(n)} : P^*\ot_B S^{\ot _B n} \ot_B P \to P^*\ot_B S^{\ot _B n-1} \ot_B P, 
$$
by
$$
\Phi_i^{(n)} : \chi \ot s_1 \ot \ldots \ot s_{n} \ot p \mapsto 
\begin{cases} \chi \circ s_1 \ot s_2 \ot \ldots \ot s_{n} \ot p, & \mbox{$i=0$,}\\
\chi \ot s_1 \ot \ldots \ot s_is_{i+1}\ldots \ot s_{n} \ot p, & \mbox{$i=1, \ldots , n-1$,}\\
\chi \ot s_1 \ot \ldots \ot s_{n-1} \ot s_{n}(p), & \mbox{$i= n$.}
\end{cases}
$$
Then 
$$
T_A(C^+)^0 = A \quad  \mbox{and} \quad T_A(C^+)^n = \bigcap_{i=0}^{n-1} \ker \Phi^{(n-1)}_i, \quad \mbox{for $n\geq 1$}.
$$
The derivative comes out as $d_x(a) = [x,a]$ on $A$,  
$$
d_x(\sum_j\chi_j \ot p_j) = \sum_{i,j} \xi_i\ot \Theta(x_i \ot \chi_j)\ot p_j - 
\sum_j\chi_j \ot 1_S\ot p_j + \sum_{i,j} \chi_j\ot  \Theta(p_j \ot\xi_i) \ot x_i,
$$
on $C^+$, and then is extended to the whole of $T_A(C^+)$ by the Leibniz rule. The curvature element is 
$$
\gamma_ x = \sum_{i,j} \xi_i\ot \Theta(x_i \ot \xi_j)\ot x_j - \sum_i\xi_i \ot 1_S\ot x_i.
$$ 
We denote this curved differential graded algebra  by $\cA({}_BP_A, x)$.

$P$ is a right $\cC$-comodule with the coaction
$p\mapsto e\ot_B p$, hence it is an integrable $\ZZ$-connection over $T(\cC, x)$ with covariant derivative:
$$
\cov^1: p\mapsto e\ot_B p - p\ot_A x.
$$
Since this connection is concentrated in degree zero (hence bounded from both directions) and $P$ is a projective $A$-module, $(P, \cov)$ is a cohesive module over  $\cA({}_BP_A, x)$.
\end{example}

\begin{proposition}\label{prop.pre-Galois}
Let $\cA = (A^\bullet,d_A,\gamma_A)$ be a curved differential graded algebra and let $\Pp = (P,\cov)$ be a cohesive curved $\cA$-module, concentrated in degree 0 and such that $P_A$ is a progenerator. Denote by $S$ the degree zero part of the endomorphism ring $\rend \cA \Pp$  and take $B\subseteq S$ such that $d_S(B) =0$ (i.e.\ $B$ is a subalgebra of the endomorphism ring  of $\Pp$ in the homotopy category of $\cA$-modules). Write $\overline{P^*\ot_B P}$ for the kernel of the evaluation map $\eps: P^*\ot_B P\to A$, take any $x \in \eps^{-1}(1)$ and define   $\theta_1 : \overline{P^*\ot_B P}\to A^1$ as the restriction of the left $A$-linear map 
$$
\theta^L : P^*\ot_B P \to A^1, \qquad \theta^L= (\eps\ot \id)\circ ( \id \ot \cov),
$$
Then the pair $\theta_0 = \id: A\to A$ and $\theta_1$together with $\omega = \theta^L(x)$, defines a morphism of curved differential graded algebras from $\cA({}_BP_A,x)$ to $\cA$.
\end{proposition}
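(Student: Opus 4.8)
The plan is to verify directly the two defining conditions of a morphism of curved differential graded algebras (Definition~\ref{def.cat}) for the pair $(T(\theta_0,\theta_1),\omega)$, where $T(\theta_0,\theta_1):T_A(C^+)\to A^\bullet$ is the graded algebra map determined by $\theta_0=\id$ and $\theta_1$ as in Example~\ref{ex.semi-free}, and $C^+=\ker\eps=\overline{P^*\ot_B P}$. The crucial input will be the Leibniz rule and the integrability of the covariant derivative $\cov$ (Example~\ref{ex.connection}): since $\Pp$ is concentrated in degree $0$, only the component $\cov^1:P\to P\ot_A A^1$ is nonzero, and integrability reads $\cov\circ\cov(p)=-p\gamma_A$ for all $p\in P$.

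First I would record the elementary properties of $\theta^L$. Left $A$-linearity is immediate from $\theta^L=(\eps\ot\id)\circ(\id\ot\cov)$. For the right action the Leibniz rule for $\cov$ (and $\eps(\chi\ot q)=\chi(q)$) gives $\theta^L(ca)=\theta^L(c)a+\eps(c)\,d_A a$ for $c\in P^*\ot_B P$, $a\in A$; hence $\theta^L$ is not right $A$-linear, but its defect $\eps(c)\,d_A a$ vanishes on $C^+$. This shows $\theta_1=\theta^L\mid_{C^+}$ is an $A$-bimodule map, so $T(\theta_0,\theta_1)$ is a well-defined morphism of graded algebras.

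Next I would reduce the two morphism conditions to the generators of the semi-free source. Setting $D(a)=T(\theta_0,\theta_1)(d_x a)-d_A T(\theta_0,\theta_1)(a)-[\omega,T(\theta_0,\theta_1)(a)]$, one checks that $D$ is a degree-one graded $T(\theta_0,\theta_1)$-derivation, because $d_x$, $d_A$ and $[\omega,-]$ are all graded derivations; as $T_A(C^+)$ is generated in degrees $0$ and $1$, it suffices to verify condition (a) on $A^0=A$ and on $A^1=C^+$. On $A^0$ the computation is direct: using $d_x a=[x,a]$, $\eps(x)=1$ and the two displayed properties of $\theta^L$, one gets $\theta_1(d_x a)=\theta^L(xa)-\theta^L(ax)=(\omega a+d_A a)-a\omega=d_A a+[\omega,a]$, which is exactly condition (a) in degree $0$.

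The substance lies in degree $1$ and in condition (b), both of which I would handle through projected-coproduct formulas. For $c\in C^+$, Theorem~\ref{thm.crg-cdga}(3) gives $d_x c=-(\pi_x^R\ot\pi_x^L)\Delta(c)$, and an identical manipulation (using $\pi_x^R(x)=0$ and $\pi_x^R\mid_{C^+}=\pi_x^L\mid_{C^+}=\id$) yields $\gamma_x=-(\pi_x^R\ot\pi_x^L)\Delta(x)$. Resolving $\Delta$ through the dual basis $e=\Theta^{-1}(1_S)=\sum_a e_a\ot\eta^a$, so that $\Delta(\chi\ot p)=\sum_a(\chi\ot e_a)\ot_A(\eta^a\ot p)$, then applying $\theta_1\ot\theta_1$ followed by multiplication in $A^\bullet$ and re-expanding $\pi_x^R,\pi_x^L$, I expect the first-order Leibniz terms of $\cov$ to reassemble into $d_A(\theta_1 c)+[\omega,\theta_1 c]$ for condition (a), and into $d_A\omega+\omega^2$ for the non-curvature part of condition (b), while the second-order term $\cov\circ\cov$ produces $\gamma_A$ through the integrability identity $\cov\circ\cov(p)=-p\gamma_A$. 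The main obstacle will be condition (b), namely $T(\theta_0,\theta_1)(\gamma_x)=\gamma_A+d_A\omega+\omega^2$: one must bookkeep carefully how the dual-basis contraction of $\cov\circ\cov$ against the base point $x$ reproduces $\gamma_A$ while the mixed first-order terms organise into $d_A\omega+\omega^2$. This is precisely the step where integrability is indispensable, and it is the curved-algebra counterpart of the fact that $\cov$ is a genuine integrable connection.
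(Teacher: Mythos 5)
Your strategy is the same as the paper's: a direct verification of the two conditions of Definition~\ref{def.cat}, with the Leibniz rule and the integrability of $\cov$ as the only inputs. Your preliminary observation that $\theta^L(ca)=\theta^L(c)a+\eps(c)\,d_Aa$, so that the right-linearity defect vanishes on $\ker\eps$, is literally the paper's device: there one sets $\theta^R=\theta^L-d_A\circ\eps$, checks that $\theta^R$ is right $A$-linear, and notes that $\theta_1$ is the common restriction of $\theta^L$ and $\theta^R$ to $\ker\eps$. Your degree-zero computation of condition (a) agrees with the paper's, and your reduction to generators via the derivation property of the defect $D$ is a tidy addition the paper does not spell out (note, though, that by the letter of Definition~\ref{def.cat}, where $A$ denotes the degree-zero component, condition (a) is imposed only in degree zero, and the paper's proof checks exactly that much).

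There is, however, a genuine gap: the verification of condition (b), $T(\theta_0,\theta_1)(\gamma_x)=\gamma_A+d_A\omega+\omega^2$, which is the substance of the proposition, is never carried out --- you write that you ``expect'' the terms to reassemble and that ``one must bookkeep carefully'', which is precisely where all the work lies. The paper performs this computation explicitly: it evaluates $\theta_2(\gamma_x)=\theta^R(x)\theta^L(x)-\sum_{i,j}\theta^R(\xi_i\ot e_j)\theta^L(\varphi_j\ot x_i)$ on the expression for $\gamma_x$ from Example~\ref{ex.comatrix} (the mixed use of $\theta^R$ and $\theta^L$ is what makes the expression well defined over $\ot_A$, since the individual tensor factors of $\gamma_x$ do not lie in $\ker\eps$), and then combines the Leibniz rules for $\cov$ and $d_A$, the dual basis property, the normalisation $\sum_i\xi_i(x_i)=1$, and --- at the decisive step --- the integrability identity in the form \eqref{integrability}, $\sum p\suc 0 \suc 0 \ot p\suc 0\suc 1 p\suc 1 + \sum p\suc 0 \ot d_Ap\suc 1 + p\ot\gamma_A =0$, to land on $\omega^2+\gamma_A+d_A\omega$. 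Your alternative packaging via $\gamma_x=-(\pi_x^R\ot\pi_x^L)\Delta(x)$ and $d_xc=-(\pi_x^R\ot\pi_x^L)\Delta(c)$ is correct, and it would even sidestep the well-definedness issue (since $\theta_1\ot\theta_1$ then acts on honest elements of $\ker\eps\ot_A\ker\eps$), so the route would succeed if executed; but as written the proposal is a plan for the crucial computation, not a proof of it. A smaller point, which the paper also leaves implicit: the hypothesis $d_S(B)=0$ is exactly what makes $\theta^L$ well defined on the tensor product over $B$, via $\cov(bp)=b\cov(p)$; a complete write-up should record this.
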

\begin{proof}
By construction, $\theta_1$ is a left $A$-linear map. Define
$$
\theta^R : P^*\ot_B P \to A^1, \qquad \theta^R = \theta^L - d_A\circ \eps.
$$
Then $\theta^R$ is right $A$-linear  by the following simple calculation that uses the Leibniz rule for $\cov$. First,  for all $p\in P$, write  $\cov(p) : = \sum p\suc 0\ot p\suc 1$, and then compute, for all $\chi\in P^*$, $a\in A$, 
\begin{eqnarray*}
\theta^R(\chi\ot pa) &=& \sum \chi((pa)\suc 0)(pa)\suc 1 - d_A(\chi(pa)) \\
&=& \sum \chi(p\suc 0)p\suc 1 a +\chi(p)d_Aa - d_A(\chi(p)) a - \chi(p)d_Aa = \theta^R(\chi\ot p)a.
\end{eqnarray*}
Note that $\theta_1$ is the restriction of $\theta^R$ to $\ker\eps = \overline{P^*\ot_B P}$, hence $\theta_1$ is $A$-bilinear and thus, together with $\theta_0 =\id$ defines an algebra map $T_A(\overline{P^*\ot_B P}) \to A^\bullet$.

Write $x = \sum_i \xi_i \ot x_i$, so that $\omega = \sum_i \xi_i(x_i\suc 0)x_i\suc 1$. Owing to the definition of the derivative $d_x$, the Leibniz rule for a covariant derivative and the fact that $\sum_i\xi_i(x_i) =1$ one obtains,
\begin{eqnarray*}
\theta_1 (d_xa) &=& \theta_1 (xa - ax) = \sum_i \xi_i((x_ia)\suc 0)(x_ia)\suc 1 - a \sum_i \xi_i(x_i\suc 0)x_i\suc 1\\
&=& \sum_i \xi_i(x_i\suc 0)x_i\suc 1a + \sum_i \xi_i(x_i)d_Aa - a \sum_i \xi_i(x_i\suc 0)x_i\suc 1 = d_Aa + [\omega, a],
\end{eqnarray*}
as required. Finally, in terms of the explicit notation for the covariant derivative, the integrability condition takes the form
\begin{equation}\label{integrability}
\sum p\suc 0 \suc 0 \ot p\suc 0\suc 1 p\suc 1 + \sum p\suc 0 \ot d_Ap\suc 1 + p\ot\gamma_A =0.
\end{equation}
Writing $e= \sum_j e_j\ot \varphi_j \in P\ot_A P^*$ for the dual basis element, using the definition of $\gamma_x$ in Example~\ref{ex.comatrix}, and observing that $\omega = \theta^L(x) = \theta^R(x)$ (for $\sum_i\xi_i(x_i) =1$), one can compute
\begin{eqnarray*}
\theta_2(\gamma_x) &=& \theta^R(x)\theta^L(x) - \sum_{i,j} \theta^R(\xi_i\ot e_j)\theta^L(\varphi_j\ot x_i)\\
&=& \omega^2 - \sum_{i,j}\xi_i(e_j\suc 0)e_j\suc 1\varphi_j(x_i \suc 0) x_i\suc 1 + \sum_{i,j}d_A(\xi_i(e_j))\varphi_j(x_i \suc 0) x_i\suc 1\\
&=& \omega^2  - \sum_{i,j}\xi_i((e_j\suc 0\varphi_j(x_i \suc 0))\suc 0)(e_j\suc 0\varphi_j(x_i \suc 0))\suc 1 x_i\suc 1  \\
&&+ \sum_{i,j}d_A(\xi_i(e_j))\varphi_j(x_i \suc 0) x_i\suc 1 + \sum_{i,j}\xi_i(e_j)d_A(\varphi_j(x_i \suc 0)) x_i\suc 1\\
&=& \omega^2 - \sum_i \xi_i(x_i\suc 0 \suc 0)x_i\suc 0\suc 1 x_i\suc 1 + \sum_i d_A(\xi_i(x_i\suc 0)) x_i\suc 1\\
&=& \omega^2 + \sum_i \xi_i(x_i)\gamma_A +\sum_i \xi_i(x_i\suc 0)d_A x_i\suc 1 + d_A(\xi_i(x_i\suc 0)) x_i\suc 1 = \omega^2 +\gamma_A + d_A\omega.
\end{eqnarray*}
The first equality follows by the definition of $\gamma_x$, the third one is a consequence of the Leibniz rule for $\cov$. The fourth equality is obtained by a combination of the Leibniz rule for $d_A$ and the dual basis property. Next, Equation~\eqref{integrability} is used. Finally we employed the definition of $\omega$ and the equality $\sum_i\xi_i(x_i) =1$. This completes that proof that the triple $(\theta_0, \theta_1, \omega)$ defines a morphism of curved differential graded algebras.
\end{proof}
\begin{example}\label{ex.order}
Let $S$ be a set and $Q\subset S\times S$ be a relation that is reflexive, transitive and locally finite in the sense that, for all $s,t\in S$, the set
$$
\omega(s,t) = \{u\in S \; |\; (s,u)\in Q\;\; \mbox{and}\;\; (u,t)\in Q\}
$$
is finite. Take any algebra $A$, consider free left $A$-module $C=A^{(Q)}$ and make it into an $A$-bimodule by
$$
\left(\sum_i a_i (s_i,t_i)\right)b = \sum_i a_i b(s_i,t_i).
$$
Define $A$-bimodule maps $\Delta: C\to C\ot_A C$, $\eps: C\to A$ by
$$
\Delta: (s,t) \mapsto \sum_{u\in \omega(s,t)} (s,u)\ot (u,t), \qquad \eps: (s,t)\mapsto \delta_{st}.
$$
Then $(C,\Delta, \eps)$ is an $A$-coring; see \cite[Section~17.4]{BrzWis:cor}. The kernel of the counit consists of all $\sum_ia_i(s_i,t_i)$ such that $\sum_i a_i \delta_{s_it_i} =0$, hence $T_A(C^+)^n$ is a submodule of $A^{(Q^n)}$ consisting of all $\sum_ia_i(s^1_i,t^1_i, \ldots  ,s^n_i,t^n_i )$ such that $\sum_i a_i \delta_{s^k_it^k_i} =0$, for all $k=1,2,\ldots , n$. The multiplication in $T_A(C^+)$ is by concatenation. Fix any $e\in S$, then $(e,e) \in C$ and $\eps(e,e)=1$, so we can construct the curved differential graded algebra $T(\cC, (e,e))$. The differential acts trivially on $A$, while on $C^+$ it is a restriction of the map 
$$
d(s,t) = (e,e,s,t) - \sum_{u\in \omega(s,t)} (s,u,u,t) + (s,t,e,e), \qquad \mbox{for all}\;\; (s,t)\in Q.
$$
The curvature element is 
$$
\gamma = - \sum_{u\in \omega(e,e),\;\; u\neq e} (e,u,u,e).
$$
\end{example}

\begin{example} \label{ex.matrix}
A curved differential graded algebra built on traceless matrices is a special case of both Example~\ref{ex.comatrix} and Example~\ref{ex.order}. Let $C = M_N(A)$ be an $A$-bimodule of $N\times N$-matrices with entries from an algebra $A$. Then one can define comultiplication and counit on $C$, by setting,
$$
\Delta(E_{ij}) = \sum_{k=1}^NE_{ik}\ot E_{kj}, \qquad \eps(E_{ij}) = \delta_{ij},
$$ 
where $E_{ij}$ are matrix units which generate $M_N(A)$ as a left (or right) $A$-module. This is an example of a comatrix coring (with $P=A^N$) or a coring associated to a partial order (with $S=\{1,\ldots , N\}$ and $Q = S\times S$). The kernel of the counit is equal to the submodule of all traceless $N\times N$-matrices in $C$. This is generated by $E_{ij}$, $i\neq j$ and $F_i := E_{NN} - E_{ii}$, $i=1,\ldots , N-1$.  The differential of $T(\cC, E_{NN})$ acts trivially on $A$, and
$$
dE_{ij} = F_i\ot E_{ij} - \sum_{k\neq i,j} E_{ik}\ot E_{kj} + E_{ij}\ot F_j, \quad
dF_{i} = F_i\ot F_{i} +\sum_{k\neq i} (-1)^{\delta_{Ni}}E_{ik}\ot E_{ki} .
$$
The curvature element is $-\sum_{k=1}^{N-1}E_{Nk}\ot E_{kN}$.
\end{example}

Any morphism $f: M\to N$ of right $\cC$-comodules $(M,\roM)$, $(N,\roN)$ induces a right $T_A(C)$-module map $T_A(f,C):= f\ot\id: M\ot_AT_A(C)\to N\ot_AT_A(C)$, i.e.\ a morphism of $T^\flat(\cC, x)$-modules $\cM = (M\ot_AT_A(C)^\bullet, \cov_M)$ to $\cN = (N\ot _AT_A(C)^\bullet, \cov_N)$. However, the compatibility of $f$ with coactions means that
$d_{\cM,\cN}(T_A(f,C)) =0$, i.e.\ comodule maps give rise to elements of the zero cohomology of the complex 
$$\rhom{T^\flat(\cC, x)}{\left(M\ot_AT_A(C)^\bullet, \cov_M\right)}{\left(N\ot_AT_A( C)^\bullet, \cov_N\right)}.$$
Similar remarks apply to $(M\ot_AT_A(C^+)^\bullet, \cov)$  in the case of a based coring $(\cC,x)$. In this way the category of comodules can be seen as a  subcategory of both $Z^0(\mod$-$T(\cC,x))$ and the homotopy category of $\mod$-$T(\cC,x)$.

The contents of Proposition~\ref{prop.comod} can be explored further to introduce the differential graded category of complexes of comodules and to reveal the triangulated structure of its homotopy category. 

\begin{definition}\label{def.comp.com}
Let $\cC = (C,\Delta, \eps)$ be an $A$-coring. An object of the category of {\em $\cC$-comodule complexes} or, taking a cue from \cite{Blo:dua},  {\em quasi-cohesive comodules}, is the family of triples $\cM:= (M^l, \varrho^M_l, \delta^l_M)$, $l\in \ZZ$, in which each $M^l$ is a right $\cC$-comodule with coaction $\varrho^M_l: M^l\to M^l\ot_A C$ and $\delta_M^l: M^l\to M^{l+1}$ are $\cC$-comodule maps such that $\delta_M^{l+1}\circ\delta_M^l =0$.  A morphism $\varphi: \cM\to \cN$ is a family of right $A$-module maps
$$
\varphi^{k,s}_l : M^l \to N^{l+s-k} \ot_A C^{\ot_A k}, \qquad k\in \NN, \; s,l\in \ZZ,
$$ 
such that for all $l\in \ZZ$, $k\in \NN$ and $m\in M^l$, $\varphi^{k,s}_l (m) \neq 0$ for a finite number of values of $s$. The composition of $\varphi: \cM\to \cN$ with $\psi: \cN\to \Pp$ is defined by
\begin{equation}\label{comp}
(\psi\circ\varphi)^{k,t}_l = \sum_{i,s} (\psi^{k-i, t-s}_{l+s-i}\ot {C^{\ot_A i}})\circ \varphi^{i,s}_l.
\end{equation}
The category of $\cC$-comodule complexes is denoted by \com\cC, and we write $\Rhom \cC \cM\cN$ for the sets of morphisms.
\end{definition}

The category \com\cC~ is a differential graded category. The degree $s$ elements in $\Rhom \cC \cM\cN$  are all those morphisms $\varphi$ with components $\varphi^{k,s}_l$ (fixed value of $s$). Once $s$ is specified, we write $\varphi_{l}^k$ for $\varphi^{k,s}_l$. For any right $A$-module map $\phi: M^l\to N^{l+s-k}\ot_A C^{\ot_A {k}}$ define the right $A$-module map $b(\phi):M^l\to N^{l+s-k}\ot_A C^{\ot_A {k+1}}$ by
$$
b(\phi) = \left(\varrho^N_{l+s-k}\ot C^{\ot_A {k}}\right)\circ \phi+ \sum_{i=1}^{k}(-1)^i\left(N^{l+s-k}\ot \Delta_i^{(k)}\right)\circ \phi +(-1)^{k+1}(\phi \ot C)\circ \varrho^{M}_l.
$$
Then, for all degree $s$ morphisms $\varphi: \cM\to\cN$, the differential is defined by
\begin{equation}\label{d.phi}
(d\varphi)^k_l = (\delta_N^{l+s-k}\ot  {C^{\ot_A i}})\circ \varphi^k_l - (-1)^s\varphi^k_{l+1}\circ\delta_M^l - (-1)^{l+s-k} b(\varphi^{k-1}_l).
\end{equation}
Equation~\eqref{d.phi} is obtained by translating to complexes of comodules the differential of $\ZZ$-connections through Proposition~\ref{prop.comod}. The novelty is that the definitions do not depend on the choice of an element of $\cC$.
An easy but rather lengthy calculation proves that the map $d$ is nilpotent and that the composition \eqref{comp} is a morphism of cochain complexes. 

Following \cite{BonKap:enh} or \cite{Blo:dua}, to any closed zero-degree morphism $\varphi: \cM\to \cN$ in \com \cC, one can associate its {\em cone}, $\mathrm{Cone}(\varphi) := (O^l,\varrho_l^O, \delta^l_O)$, where
$$
O^l = \begin{pmatrix} N^l \cr \oplus \cr M^{l+1} \end{pmatrix}, \qquad \varrho_l^O = \begin{pmatrix} \varrho^N_l  & \varphi^1_{l+1} \cr & & \cr 0 & \varrho^M_{l+1} \end{pmatrix}, \qquad \delta^l_O = \begin{pmatrix} -\delta_N^l  & \varphi^0_{l+1} \cr & & \cr 0 & \delta_M^{l+1} \end{pmatrix}.
$$
In this way the homotopy category of \com \cC~ becomes a triangulated category with distinguished triangles obtained from
$$
\xymatrix{ \cM \ar[r]^-\varphi & \cN \ar[r]^-{\iota} & \mathrm{Cone}(\varphi) \ar[r]^-\pi & \cM[1]},
$$
where $\cM[1]$ denotes the comodule complex obtained by shifting by one degrees in $\cM$ and the morphisms $\iota$ and $\pi$ have the only non-zero components
$$
\iota_l^0 (n) = (-1)^l \begin{pmatrix} n \cr 0 \end{pmatrix}, \qquad \pi_l^0  \begin{pmatrix} n \cr m \end{pmatrix} =m.
$$
Following \cite{Blo:dua} one can consider quasi-cohesive $\cC$-comodules $\cM:= (M^l, \varrho^M_l, \delta^l_M)$ that are bounded in both directions and such that each of the $M^l$ is  finitely generated and projective as an $A$-module, and term such objects {\em cohesive comodules}. The homotopy category of the category of cohesive comodules forms a full triangulated subcategory of the homotopy category of \com\cC.

\subsection{...and back}
Here we associate a based coring to  a semi-free curved differential graded algebra and thus establish an equivalence of categories of semi-free differential graded algebras and corings with surjective counits.
\begin{theorem}\label{thm.cdga-crg}
Let $A$ be an algebra, $V$ an $A$-bimodule and let $\cA = (T_A(V), d, \gamma)$ be a semi-free curved differential graded algebra. Consider the left $A$-module $C=A\oplus V$, and write $x$ for the generator of $C$ corresponding to $A$, so that a general element of $C$ can be written additively as $ax+v$, $a\in A$, $v\in V$.
\begin{zlist}
\item $\cC(A, V) = (C,\Delta,\eps)$ is an $A$-coring based at $x$ by the following operations:
\begin{blist}
\item The right $A$-multiplication of $a'\in A$ with $ax+v\in C$ is defined by:
$$
(ax +v) a' := aa'x +ada' + va';
$$
\item the comultiplication is defined by:
$$
\Delta(ax +v) = ax\ot x -a\gamma +x\ot v +v\ot x - dv;
$$
\item the counit is $\eps(ax+v) = a$.
\end{blist}
\item Any morphism $(f_0,f_1,\omega)$ of  curved differential graded algebras $(T_A(V), d_A, \gamma_A) \to (T_B(W), d_B, \gamma_B)$ yields a morphism $(\cC(f)_0,\cC(f)_1)$ of the corresponding (based) corings $\cC(A,V)\to  \cC(B,W)$, where
$$
\cC(f)_0 = f_0, \qquad \cC(f)_1: (a, v) \mapsto (f_0(a), f_0(a)\omega +f_1(v)).
$$
\item The assignments described in statements (1) and (2) define an equivalence of categories $\sfcdga \simeq \bcrg$.
\end{zlist}
\end{theorem}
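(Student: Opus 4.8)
The plan is to exhibit the assignment $\cC(-,-)\colon\sfcdga\to\bcrg$ of statements (1) and (2) as a quasi-inverse to the functor $T$ of Theorem~\ref{thm.crg-cdga}(3) (which lands in $\sfcdga$). First I would confirm that $\cC(-,-)$ really is a functor: identities are preserved on the nose, and preservation of composition I would obtain by inserting the composition rule of Definition~\ref{def.cat} into $\cC(f)_1(ax+v)=f_0(a)y+(f_0(a)\omega+f_1(v))$ and invoking the induced left $B$-linearity of the degree-one component. It then suffices to produce natural isomorphisms $T\circ\cC(-,-)\cong\id_{\sfcdga}$ and $\cC(-,-)\circ T\cong\id_{\bcrg}$.

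For $T\circ\cC(-,-)$ I would start from $\cA=(T_A(V),d,\gamma)$, form $\cC(A,V)=(A\oplus V,\Delta,\eps)$, and observe that $\eps(ax+v)=a$ forces $(A\oplus V)^+=V$ as an $A$-bimodule, so $T(\cC(A,V),x)$ is again built on $T_A(V)$. To identify it with $\cA$ I would read off $\gamma_x=x\ot x-\Delta(x)=\gamma$ from $\Delta(x)=x\ot x-\gamma$, compute $d_x(a)=[x,a]=da$ from the right action of (1)(a), and evaluate the restricted differential of Theorem~\ref{thm.crg-cdga}(3) on $v\in V$, where $\Delta(v)=x\ot v+v\ot x-dv$ gives $d_x(v)=-(\pi_x^R\ot\pi_x^L)(\Delta v)=dv$. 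Since $d_x$ and $d$ are graded derivations agreeing on generators, this composite is the identity on objects; on morphisms, passing $(f_0,f_1,\omega)$ through $\cC(-,-)$ and then $T$ returns $f_1(x)-y=\omega$ for the connection part and $T(f_0,f_1)$ for the algebra part, so $T\circ\cC(-,-)$ is the identity natural transformation.

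For $\cC(-,-)\circ T$ I would fix a based coring $(\cC,x)$, invoke the left $A$-module splitting $c=\eps(c)x+\pi_x^L(c)$ of Lemma~\ref{lem.split}, and define $\eta_{(\cC,x)}\colon\cC(A,C^+)\to\cC$, $(a,v)\mapsto ax+v$. Counit and base point are preserved immediately; the right $A$-action matches because in $C$ one computes $(ax+v)a'=aa'x+a[x,a']+va'$, which is exactly the rule (1)(a) with $d_x(a')=[x,a']$; and the comultiplications agree once I verify, for $v\in C^+$,
$$
\Delta(v)-(\pi_x^R\ot\pi_x^L)(\Delta v)=x\ot v+v\ot x,
$$
with the term $ax\ot x$ being absorbed via $\gamma_x=x\ot x-\Delta(x)$ so that the $A$-component reproduces $a\Delta(x)$. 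Then $\eta_{(\cC,x)}$ is an isomorphism in $\bcrg$, and its naturality I would check directly against the morphism formula of (2) and of Theorem~\ref{thm.crg-cdga}(2).

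The main obstacle is precisely the displayed identity — equivalently, the assertion that $d_x$ restricted to $C^+$ is the tensor-algebra differential. This is the one step that is not purely formal, being where the projections $\pi_x^L,\pi_x^R$ interact with $\Delta$; I would settle it by expanding $(\pi_x^R\ot\pi_x^L)(\Delta v)$ through $\pi_x^R(c)=c-x\eps(c)$ and $\pi_x^L(c)=c-\eps(c)x$ and collapsing the four resulting terms using counitality together with $\eps(v)=0$. Granting these compatibilities, the two natural isomorphisms witness the equivalence $\sfcdga\simeq\bcrg$.
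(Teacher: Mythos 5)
Your handling of part (3) is correct and coincides with the paper's: $T\circ\cC(-,-)$ is the identity on the nose (with $d_x=d$, $\gamma_x=\gamma$, and $f_1(x)-y=\omega$ exactly as you compute), and your map $\eta_{(\cC,x)}\colon (a,v)\mapsto ax+v$ is precisely the inverse of the paper's isomorphism $\varphi\colon c\mapsto \eps(c)y+(c-\eps(c)x)$, so you are checking the same natural isomorphism from the other side. Your displayed identity $\Delta(v)-(\pi_x^R\ot\pi_x^L)(\Delta v)=x\ot v+v\ot x$ for $v\in C^+$ is true and follows, as you indicate, by expanding the two projections and collapsing four terms via counitality and $\eps(v)=0$.

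The genuine gap is that you never prove statements (1) and (2), which the theorem asserts and which your quasi-inverse scheme presupposes: that $\cC(A,V)$ actually satisfies the coring axioms, and that $\cC(f)$ is a morphism of corings. This is where all three curved-dga axioms are consumed, and none of them appears in your proposal. Concretely: (i) right $A$-linearity of $\Delta$ is not formal --- expanding $\Delta((ax+v)b)$ produces $d(adb)=da\ot db+a\,d^2b$, and one needs the curvature relation $d\circ d(b)=[\gamma,b]$ to make the $\gamma$-terms match; (ii) coassociativity of $\Delta$ is the central computation of the whole theorem: after the obvious cancellations in $(\id\ot\Delta-\Delta\ot\id)\circ\Delta(ax+v)$ one is left with terms that vanish only by combining the Bianchi identity $d\gamma=0$ with the curvature relation $d\circ d(v)=[\gamma,v]$ and $da=[x,a]$; (iii) in part (2), compatibility of $\cC(f)_1$ with the comultiplications requires exactly condition (b) of Definition~\ref{def.cat}, $f_1(\gamma_A)=\gamma_B+d_B\omega+\omega^2$, while its right $A$-linearity uses condition (a). Relatedly, you misplace the difficulty: the step you single out as ``the one step that is not purely formal'' is a routine counitality calculation, whereas the non-formal content of the theorem --- each curved-dga axiom being used in an essential way --- sits precisely in the coring-axiom and coring-morphism verifications you omitted.
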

\begin{proof}
(1) Since $d$ is a derivation, i.e.\ it satisfies the Leibniz rule, the right multiplication defined in (a) furnishes $C$ with the structure of an $A$-bimodule. Note that the right $A$-action is constructed in such a way that $da = [x,a]$, for all $a\in A$. Using this observation, take any $b\in A$ and compute
\begin{eqnarray*}
\Delta(b(ax +v)) &=& bax\ot x -ba\gamma +x\ot bv +bv\ot x - d(bv) \\
&=& b(ax\ot x -a\gamma +x\ot v +v\ot x - dv)
 + [x,b]\ot v - db\ot v \\
 &=& b \Delta(ax +v).
\end{eqnarray*}
Furthermore,
\begin{eqnarray*}
\Delta((ax +v)b) &=& \Delta(abx +adb + vb) \\
&= & abx\ot x -ab\gamma +x\ot vb  +vb\ot x - d(vb) + x\ot adb + adb\ot x - d(adb)\\
&=& abx\ot x -ab\gamma +x\ot vb  +v\ot bx - d(v)b +v\ot db\\
&& +  xa\ot db + adb\ot x - da\ot db - a[\gamma,b]\\
&=& ax\ot bx - a\gamma b + x\ot vb +v\ot xb - d(v)b + ax\ot db\\
&=& ax\ot xb - a\gamma b + x\ot vb +v\ot xb - d(v)b = \Delta(ax +v)b ,
\end{eqnarray*}
where the graded Leibniz rule and the curvature of $d$ were used in derivation of the third equality and we  applied  repeatedly that $d$ on $A$ is given by the commutator with $x$. This proves that $\Delta$ is an $A$-bimodule map.

Before we prove that $\Delta$ is coassociative, it is convenient to introduce a Sweedler-like notation for $\gamma$ and $dv$, i.e.\ to write $\gamma = \sum \gamma \su1 \ot \gamma\su 2$ and $dv = \sum dv\suc 1\ot dv\suc 2$. In this notation the Bianchi identity takes the form
$$
\sum d(\gamma\su 1) \ot \gamma\su 2 - \sum \gamma \su 1\ot d(\gamma\su 2) =0,
$$
while the curvature condition for $v$ reads
$$
\sum d(dv\suc 1)\ot dv\suc 2 - \sum dv\suc 1\ot d(dv\suc 2) + \sum v\ot \gamma\su 1\ot \gamma \su 2 - \sum  \gamma\su 1\ot \gamma \su 2 \ot v =0.
$$
Applying $(\id\ot \Delta - \Delta\ot \id)\circ \Delta$ to $ax +v$, and omitting obvious cancellations we obtain
\begin{eqnarray*}
&& -\sum ax\ot \gamma\su 1\ot \gamma \su 2 + \sum a \gamma\su 1\ot d(\gamma \su 2) - \sum v\ot \gamma\su 1\ot \gamma \su 2 + \sum dv\suc 1\ot d(dv\suc 2)  \\
&& + \sum xa\ot \gamma\su 1\ot \gamma \su 2 - \sum d(a\gamma\su 1)\ot \gamma \su 2 +\sum  \gamma\su 1\ot \gamma \su 2 \ot v - \sum d(dv\suc 1)\ot dv\suc 2,
\end{eqnarray*}
which vanishes by the curvature condition (terms 3, 4, 7 and 8) and the combination of the Leibniz rule (applied to the sixth term), the Bianchi identity and the fact that $da = [x,a]$. Therefore, $\Delta$ obeys the coassociative law. That $\eps$ is the counit for this comultiplication and that $x$ is a base point are immediate, since $\eps$ is designed so that it is identity on $A$ (i.e.\ $\eps(x)=1$) and zero on $V$.

(2) To facilitate the calculations we will write the bimodule parts of $\cC(A,V)$ and $\cC(B,W)$ additively as $Ax \oplus V$ and $By\oplus W$, respectively. We will also write $\Delta_C$ and $\eps_C$ for the comultiplication and counit of $\cC(A,V)$ and $\Delta_D$ and $\eps_D$ for the structure maps of $\cC(B,W)$.

First we need to check, whether $\cC(f)_1$ is an $A$-bimodule map. Since $f_0$ is an algebra map and $f_1$ is a left $A$-module map, also $\cC(f)_1$ is left $A$-linear. For the right $A$-linearity, take any $b\in A$, and compute
\begin{eqnarray*}
\cC(f)_1((ax +v)b) &=& f_0(ab)y +f_0(ab)\omega +f_1(ad_Ab + vb)\\
&=& f_0(a)f_0(b)y + f_0(a)f_0(b)\omega \\
&&+f_0(a)d_Bf_0(b) +f_0(a)[\omega, f_0(b)] + f_1(v)f_0(b)\\
&=& f_0(a)f_0(b)y + f_0(a)\omega f_0(b) +f_0(a)d_Bf_0(b) + f_1(v)f_0(b)\\
&=& \cC(f)_1(ax +v)f_0(b),
\end{eqnarray*}
where the condition (a) in Definition~\ref{def.cat} has been used. 

Clearly, $\eps_D\circ \cC(f)_1 = \cC(f)_0 \circ \eps_C$, hence $\cC(f)_1$ is compatible with counits, so only the preservation of comultiplications need be checked. Since the comultiplications and $\cC(f)_1$ are left $A$-module maps, suffices it to consider their actions on elements of the form $x + v$, $v\in V$. On one hand
\begin{eqnarray*}
\Delta_D (\cC(f)_1(x +v)) &=& y\ot y - \gamma_B + y\ot(\omega +f_1(v)) +(\omega +f_1(v))\ot y - d_B (\omega+f_1(v))\\
&=& (y+\omega)\ot (y +\omega) - (f_1\ot f_1)(\gamma_A)  \\
&&+ (y+\omega)\ot f_1(v) + f_1(v)\ot (y +\omega) - (f_1\ot f_1)(d_A v)\\
&=& \cC(f)_1(x)\ot \cC(f)_1(x) - (f_1\ot f_1)(\gamma_A)  \\
&&+ \cC(f)_1(x)\ot \cC(f)_1(v) + \cC(f)_1(v)\ot \cC(f)_1(x) - (f_1\ot f_1)(d_A v)\\
&=& (\cC(f)_1(v)\ot \cC(f)_1)(\Delta_C(x+v)),
\end{eqnarray*}
where the definition of a morphism of curved differential graded algebras was used to derive the second equality and the definition of $\cC(f)_1$ to derive the third and fourth ones. Therefore, $(\cC(f)_0,\cC(f)_1)$ is a morphism of corings as required.

(3) Consider the assignment:
$$
U : \sfcdga \to \bcrg , \qquad (T_A(V), d, \gamma)\mapsto \cC(A,V), \qquad f\mapsto (\cC(f)_0, \cC(f)_1).
$$
One easily checks that $U$ preserves the composition of morphisms, hence defines a functor. We will show that the functors $U$ and $T: (\cC,x)\mapsto (T_A(\cC^+), d_x,\gamma_x)$, constructed in Theorem~\ref{thm.crg-cdga}, are inverse equivalences. First  take a semi-free curved differential graded algebra $(T_A(V), d, \gamma)$. Since $(Ax\oplus V)^+ = V$, $T\circ U(T_A(V), d, \gamma) = (T_A(V), d_x, \gamma_x)$, where, for all $a\in A$ and $v\in V$,
$$
d_xa = xa - ax = da, \quad d_xv = x\ot v - \Delta(v) + v\ot x = dv,
$$
by the definitions of the right $A$-action and the comultiplication of $\cC(A,V)$. Thus $d_x = d$. Furthermore,
$$
\gamma_x = x\ot x - \Delta(x) = x\ot x -x\ot x +\gamma = \gamma, 
$$
by the definition of comultiplication of $\cC(A,V)$.
Therefore, $ T\circ U = \id$ on objects. Similarly, given a morphism $(f,\omega) : (T_A(V), d_A, \gamma_A) \to (T_B(W), d_B, \gamma_B) $, obviously $T\circ U(f)_0 =f_0$ and $T\circ U(f)_1 =f_1$. Furthermore, since $f_1 (1,0)  - (1,0) = (1,\omega) - (1,0) = (0,\omega)$, so that $T\circ U(f,\omega) = (f,\omega)$, hence $ T\circ U = \id$.

In the opposite direction, take an $A$-coring $\cC=(C,\Delta,\eps)$ based at $x\in C$, so that 
$$
U\circ T(\cC,x) = U(T_A(C^+), d_x, \gamma_x) = (Ay\oplus C^+, \Delta_V, \eps_V, y).
$$
Define 
$$\varphi: C\to Ay\oplus C^+, \qquad c\mapsto \eps(c)y + (c-\eps(c) x) = y\eps(c) + (c-x \eps(c)),
$$
where the equality follows by the definitions of the right $A$-multiplication on $Ay\oplus C^+$ and of $d_x$, i.e.\ $ya = ay +d_x a = ay + [x,a]$. We will prove that $(\id_A, \varphi)$ is an isomorphism of $A$-corings. In view of two equivalent descriptions of $\varphi$, $\varphi$ is an $A$-bimodule map. Clearly, $\eps_V\circ \varphi = \eps$. To prove that $\varphi$ preserves comultiplications, take any $c\in C$ and use the definitions of $\Delta_V$, $\gamma_x$ and $d_x$, and the fact that $[y,a] = [x,a]$, for all $a\in A$, to compute
\begin{eqnarray*}
\Delta_V(\varphi(c)) &=& \eps(c) y\ot y -\eps(c) x\ot x +\eps(c) \Delta(x) + y\ot (c - \eps(c)x) +(c - \eps(c)x)\ot y \\
&& -x\ot c + \Delta(c) - c\ot x +x\ot\eps(c)x - \eps(c) \Delta(x) +\eps(c)x\ot x\\
&=& y\eps(c)\ot  y + y\ot c -y\eps(c)\ot x + c\ot y\\
&& - x\ot c + \Delta(c) - c\ot x +x\eps(c)\ot x  -x\eps(c)\ot y \\
&=& \sum (y\eps(c\sw 1) + c\sw 1-x \eps(c\sw 1))\ot (\eps(c\sw 2)y + c\sw 2-\eps(c\sw 2) x)\\
& =&  (\varphi\ot\varphi)\circ \Delta(c).
\end{eqnarray*}
Hence $(\id_A, \varphi)$ is a morphism of based corings. Next define
$$
\psi: Ay\oplus C^+ \to C, \qquad ay + v\mapsto ax+v.
$$
Then, for all $c\in C$, 
$$
\psi\circ \varphi (c) = \eps(c)x + c - \eps(c)x  = c,
$$
and, for all $a\in A$, $v\in C^+$,
$$
\varphi\circ \psi(ay+v) = ay + ax +v  - \eps(ax +v) x = ay + ax +v -ax = ay +v.
$$
Therefore, $\varphi$ is an isomorphism with the inverse $\psi$. Finally, an equally straightforward calculation yields that $U\circ T$ is an identity on morphisms (and thus implies the naturality of the isomorphisms $\varphi$), and completes the proof of the assertion.
\end{proof}

\begin{corollary}\label{cor.surjective}
The category of semi-free curved differential graded algebras is equivalent to the full subcategory of the category of corings consisting of corings with surjective counit.
\end{corollary}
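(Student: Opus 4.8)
The plan is to obtain this statement by composing two equivalences already established in the excerpt, so that essentially no new argument is required. First I would invoke Theorem~\ref{thm.cdga-crg}(3), which provides an equivalence $\sfcdga \simeq \bcrg$ between the category of semi-free curved differential graded algebras and the category of based corings, with the functors $T$ and $U$ serving as mutually quasi-inverse equivalences. This is the substantive input, and it has already been proved in full, so I would simply cite it.

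Next I would recall Lemma~\ref{base.sur}, which identifies $\bcrg$ with the full subcategory $\scrg$ of $\crg$ consisting of those corings whose counit is surjective, the equivalence being realised by the forgetful functor $(\cC,x)\mapsto \cC$. The only point worth keeping in mind is that although a single coring with surjective counit may admit many different base points, the morphisms in $\bcrg$ impose no condition on the base point; hence the forgetful functor is full and faithful as well as essentially surjective (the latter because surjectivity of the counit guarantees, via the splitting of Lemma~\ref{lem.split}, that a base point exists). This is precisely what makes the passage from ``based coring'' to ``coring with surjective counit'' harmless, and it is exactly the content of Lemma~\ref{base.sur}.

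Finally I would compose the two equivalences. Since equivalence of categories is transitive and the two equivalences share $\bcrg$ as a common endpoint, the chain $\sfcdga \simeq \bcrg \simeq \scrg$ yields the desired $\sfcdga \simeq \scrg$, i.e.\ the category of semi-free curved differential graded algebras is equivalent to the full subcategory of $\crg$ of corings with surjective counit. I do not expect any genuine obstacle here: the corollary is purely formal given the two preceding results, and the sole thing to check is the composability of the equivalences, which is immediate.
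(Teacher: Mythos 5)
Your proposal is correct and coincides with the paper's own proof, which likewise just combines the equivalence $\sfcdga \simeq \bcrg$ of Theorem~\ref{thm.cdga-crg} with the equivalence $\bcrg \simeq \scrg$ of Lemma~\ref{base.sur}. Your additional remarks on fullness, faithfulness and essential surjectivity of the forgetful functor merely restate the content already established in Lemma~\ref{base.sur}, so nothing further is needed.
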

\begin{proof}
Combine the equivalence in Theorem~\ref{thm.cdga-crg}  with the equivalence in Lemma~\ref{base.sur}.
\end{proof}

\begin{remark}\label{rem.right}
The form of a correspondence between based corings and semi-free curved differential graded algebras relates the kernel of a counit with the bimodule on which the differential algebra is built, so it should not depend on the direct sum decomposition of a coring into the kernel of the counit and the base ring. Yet Theorem~\ref{thm.cdga-crg}  describes a specific left $A$-module decomposition. In fact, given a semi-free curved differential graded algebra $\cA = (T_A(V), d, \gamma)$  one can construct an $A$-coring $\cD(A, V) = (D,\Delta,\eps)$ based at, say, $y$ as follows. $D = A\oplus V$ as a {\em right} $A$-module, so we will write $ya +v$ for an element of $D$. The left $A$-multiplication is defined by 
$$
a(yb +v) = yab -d(a)b +av,
$$
while the coproduct and counit are:
$$
\Delta(ya +v) = y\ot ya - \gamma a +y\ot v + v\ot x -dv, \qquad \eps(ya+v) =a.
$$
One can easily check, however, that  $\cD(A, V)$ is isomorphic to $\cC(A, V)$ by the map of corings given as the identity on $A$ and on $D$ as
$$
D\to C, \qquad ya +v \mapsto ax +da +v = xa +v,
$$
(since $da = [x,a]$ in $C$). Thus the choice made in establishing an equivalence in Theorem~\ref{thm.cdga-crg}  is a matter of convention not essence.
\end{remark}

One can complement  Proposition~\ref{prop.comod} by assigning comodule complexes to integrable $\ZZ$-connections. Recall from Example~\ref{ex.connection} that the covariant derivative of a $\ZZ$-connection $(M^\bullet, \cov)$ is fully determined by its value on $M^\bullet = M^\bullet \ot_A A^0$ (recall that $A=A^0$). As before, we write $\cov^{n,l}$ for the component of $\cov$ given as $\cov^{n,l}: M^l \mapsto M^{l -n+1} \ot_A A^n$.

 \begin{proposition}\label{prop.con}
 Let $(M^\bullet, \cov)$ be an integrable $\ZZ$-connection over a semi-free curved differential algebra $(T_A(V), d,\gamma)$, such that $\cov^{n,l} =0$, for all $n\geq 2$. Write $C= Ax\oplus V$ and define, for all $l\in \ZZ$,
 $$
 \varrho_l: M^l \to M^l\ot_A (A\oplus V), \qquad m\mapsto (-1)^l\cov^1(m) + m\ot x.
 $$
 Then $(M^l, \varrho_l, \cov^{0,l})$ is a complex of right comodules over the based coring $U(T_A(V), d,\gamma)$.
 \end{proposition}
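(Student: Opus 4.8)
The plan is to verify the three requirements of Definition~\ref{def.comp.com}: that each $\varrho_l$ makes $M^l$ a right comodule over $U(T_A(V),d,\gamma)=\cC(A,V)$ (right $A$-linearity, counitality, coassociativity), that each $\cov^{0,l}$ is a morphism of $\cC(A,V)$-comodules, and that $\cov^{0,l+1}\circ\cov^{0,l}=0$. The organizing idea I would exploit is that all five conditions are encoded, graded piece by graded piece, in the single integrability identity $\cov\circ\cov(m)=-m\ot\gamma$ of Example~\ref{ex.connection}, read on $M^l=M^l\ot_A A^0$. Throughout I would write $\cov^1(m)=\sum_i m_i\ot v_i$ with $m_i\in M^l$, $v_i\in V$, and use the structure maps of $\cC(A,V)$ from Theorem~\ref{thm.cdga-crg}(1): $\eps(ax+v)=a$, $\Delta(x)=x\ot x-\gamma$ and $\Delta(v)=x\ot v+v\ot x-dv$.

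I would dispatch the easy conditions first. Counitality is immediate, since $\eps$ vanishes on $V$ and $\eps(x)=1$, so $(\id\ot\eps)$ applied to $\varrho_l(m)=(-1)^l\cov^1(m)+m\ot x$ kills the $\cov^1$-term and returns $m$. Right $A$-linearity of $\varrho_l$ follows from the Leibniz rule of the $\ZZ$-connection: writing $\cov(ma)=\cov(m)a+(-1)^l m\,da$ for $a\in A=A^0$ and separating by the $A^\bullet$-grading gives $\cov^{1,l}(ma)=\cov^{1,l}(m)a+(-1)^l m\ot da$; feeding this into $\varrho_l(ma)$ reproduces $\varrho_l(m)\,a$ once one expands the right action $(m\ot x)a=ma\ot x+m\ot da$ dictated by the multiplication rule $(ax+v)a'=aa'x+a\,da'+va'$ on $C$.

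The heart of the argument is to decompose integrability by the $A^\bullet$-grading. Using the hypothesis $\cov^{n,l}=0$ for $n\geq 2$, the Leibniz rule shows that $\cov\circ\cov(m)$ has components only in $M^{l+2}\ot A^0$, $M^{l+1}\ot A^1$ and $M^l\ot A^2$, and integrability forces the first two to vanish and the last to equal $-m\ot\gamma$. The $A^0$-component reads $\cov^{0,l+1}\circ\cov^{0,l}(m)=0$, which is precisely the complex condition. The $A^1$-component reads $\cov^{1,l+1}(\cov^{0,l}(m))+\sum_i\cov^{0,l}(m_i)\ot v_i=0$; I would check that, after the common term $\cov^{0,l}(m)\ot x$ cancels, this is exactly the equality $\varrho_{l+1}\circ\cov^{0,l}=(\cov^{0,l}\ot\id)\circ\varrho_l$, i.e.\ that $\cov^{0,l}$ is a comodule morphism in the sense of Definition~\ref{def.comodule}. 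The $A^2$-component records the identity $\sum_i\cov^1(m_i)\ot v_i+(-1)^l\sum_i m_i\ot dv_i=-m\ot\gamma$.

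Finally I would prove coassociativity by direct expansion of $(\varrho_l\ot_A\id)\circ\varrho_l$ and $(\id\ot_A\Delta)\circ\varrho_l$ on $m$, substituting $\Delta(x)$ and $\Delta(v_i)$ from the structure maps. After cancelling the matching terms $(-1)^l\sum_i m_i\ot x\ot v_i$, $m\ot x\ot x$ and $(-1)^l\sum_i m_i\ot v_i\ot x$, the required identity collapses to exactly the $A^2$-component above, so coassociativity is equivalent to that piece of integrability. I expect the sign- and bracket-bookkeeping in this last expansion to be the main obstacle; everything else is a mechanical transcription of the graded pieces of integrability into comodule language, mirroring in reverse the passage of Proposition~\ref{prop.comod}.
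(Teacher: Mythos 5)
Your proposal is correct and takes essentially the same route as the paper: the paper likewise obtains right $A$-linearity of the $\varrho_l$ from the Leibniz rule together with $da=[x,a]$, notes counitality as evident, and decomposes the integrability condition by the $A^\bullet$-grading into exactly your three identities --- $\cov^{0,l+1}\circ\cov^{0,l}=0$, $(\cov^{0,l}\ot \id)\circ\cov^{1,l}+\cov^{1,l+1}\circ\cov^{0,l}=0$, and the degree-two identity $(\cov^{1,l}\ot_A\id)\circ\cov^{1,l}(m)+(-1)^l(\id\ot_A d)\circ\cov^{1,l}(m)+m\ot\gamma=0$ --- which give, respectively, the complex condition, colinearity of the $\cov^{0,l}$, and coassociativity upon substituting the comultiplication of $U(T_A(V),d,\gamma)$. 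Your sign bookkeeping in the $A^1$- and $A^2$-components checks out, so the expansion you flag as the main obstacle goes through as claimed.
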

 \begin{proof}
 By the definition of the right $A$-multiplication on $Ax\oplus V$, Theorem~\ref{thm.cdga-crg}(1)(a), $da = [x,a]$, hence, since $(-1)^l\cov^{1,l}$ satisfies the Leibniz rule,
$$
\cov^{1,l}(ma) = \cov^1(m)a + (-1)^lm\ot xa - (-1)^lma \ot x,
$$
which in turn implies that the $\varrho_l$ are right $A$-linear maps. In view of the definition of the counit of $U(T_A(V), d,\gamma)$, the $\varrho_l$ are evidently counital. Finally, the integrability of $\cov$ yields three conditions:
$$
\cov^{0,l+1}\circ \cov^{0,l} =0, \qquad (\cov^{0,l}\ot \id)\circ \cov^{1,l} + \cov^{1,l+1}\circ \cov^{0,l}=0,
$$
and, for all $m\in M^l$, 
$$
(\cov^{1,l} \ot_A\id  )\circ \cov^1(m) + (-1)^l (\id \ot_A d) \circ \cov^{1,l}(m) + m\ot \gamma =0,
$$
Combining the third identity with the definition of the comultiplication of $U(T_A(V), d,\gamma)$ in Theorem~\ref{thm.cdga-crg}(1)(b), one easily verifies that the $\varrho_l$ satisfy the coassociative law. 
The first two conditions then mean that $(M^l, \varrho_l, \cov^{0,l})$ is a complex of comodules over $U(T_A(V), d,\gamma)$. 
 \end{proof}

\section{Integrable divergences}
\setcounter{equation}{0}
The aim of this section is to introduce the notion of an {\em integrable $\ZZ$-divergence} as a special example of a curved module that supplements the notion of a quasi-cohesive module. 

Consider a curved differential graded algebra $\cA = (A^\bullet, d,\gamma)$ and a $\ZZ$-graded right $A$-module $M^\bullet$. Construct the  $\ZZ$-graded right $A^\bullet$-module $\Xi(A,M)^\bullet$ by first setting
$$
\Xi(A,M)^n = \prod_{i\in \NN}\rhom A {A^{i}}{M^{n+i}},
$$
and then defining the right $A^\bullet$-multiplication as follows. Write  $\xi\in \Xi(A,M)^n$  as  $\xi = (\xi_i)_{i\in \NN} $, where $\xi_i\in \rhom A {A^i} {M^{n+i}}$. Then, for all $a\in A^k$, define $\xi_i a\in \rhom A{A^{i-k}} {M^{n+i}}$ by 
$$
\xi_i a = \begin{cases} b\mapsto \xi_i(ab), & \mbox{if $i\geq k$,}\\
0, & \mbox{otherwise,}
\end{cases}
$$
and set $\xi a := (\xi_ia)_{i\in \NN}$.

\begin{definition}\label{def.hom-conn}
Let $\cA = (A^\bullet, d,\gamma)$ be a curved differential graded algebra, and write $A :=A^0$. A {\em $\ZZ$-divergence} over $\cA$ is a pair $(M^\bullet, \voc)$ consisting of a $\ZZ$-graded right $A$-module $M^\bullet$ and the degree-one map $\voc: \Xi(A,M)^\bullet\to \Xi(A,M)^\bullet$ such that, for all homogeneous $\xi\in \Xi(A,M)^\bullet$ and $a\in A^\bullet$,
$$
\voc (\xi a) = \voc(\xi)a + (-1)^{|\xi |} \xi da.
$$
A $\ZZ$-divergence $(M^\bullet, \voc)$ is said to be {\em integrable} provided, for all $\xi\in \Xi(A,M)^\bullet$,
$$
\voc\circ\voc (\xi) = - \xi\gamma.
$$
\end{definition}
Clearly, $(M^\bullet, \voc)$ is an integrable $\ZZ$-divergence over $\cA$ if and only if $(\Xi(A,M)^\bullet, \voc)$ is a curved right $\cA$-module.

\begin{remark}\label{rem.div}
The map $\voc$ is fully determined by its components
$$
\xymatrix{\voc^{n,m}:
 \Xi(A,M)^{n} \ar[r]^-{\voc} &  \Xi(A,M)^{n+1} \ar@{->>}[r]^-{\pi} & \rhom A {A^m} {M^{m+n+1}},}
$$
where $\pi$ is the canonical epimorphism in the direct product decomposition of $\Xi(A,M)^\bullet$. In terms of these components, the Leibniz rule in Definition~\ref{def.hom-conn} comes out as follows. For all $a\in A^s$ and  all $\xi = (\xi_k)_{k\in \NN} \in  \prod_{k\in \NN}\rhom A {A^{k}}{M^{n+k}}$,
\begin{equation}\label{nablanm}
\voc^{n+s,m} (\xi a) = \voc^{n, m+s}(\xi)a + (-1)^n \xi_{m+s+1} da.
\end{equation}
Next, define $\voc_k^n: \rhom A {A^k}{M^{n +k }}\to  M^{n+1}$ as  the composite of $\voc^{n,0}$  with the inclusion $\rhom A {A^k}{M^{n+k}} \hookrightarrow \Xi(A,M)^{n}$. Then Equation~\eqref{nablanm} implies, for all $a\in A$ and $\xi \in \rhom A {A^k}{M^{n +k }}$,
\begin{equation}\label{nablan0}
\voc^n_k(\xi a) = \voc^n_k(\xi)a +(-1)^n\delta_{k1}\xi da .
\end{equation}
Therefore, for all  $k\neq 1$, the $\voc_k^n$ are $A$-linear, while  $(M^n, (-1)^{n}\voc_1^n)$ are hom-connections in the sense of \cite{Brz:con}.

Conversely, given a family of maps $\voc^n_k$, finite for each $n$ and  satisfying conditions in Equation~\eqref{nablan0}, one can define $\voc^{n,m}: \Xi(A,M)^{n} \to  \rhom A {A^m} {M^{m+n+1}}$, by setting, for all $\xi = (\xi_k)_{k\in \NN} \in  \prod_{k\in \NN}\rhom A {A^{k}}{M^{n+k}}$ and $a\in A^m$,
\begin{equation}\label{0tom}
\voc^{n,m}(\xi)(a) = \sum_k\voc^{n+m}_{k}(\xi_{m+k} a) + (-1)^{n+1}\xi_{m+1}(da).
\end{equation}
These will satisfy Equations~\ref{nablanm} and thus can be used to construct a $\ZZ$-divergence.
\end{remark}

\begin{example}\label{ex.contra}
Let $\cA = (A^\bullet, d,\gamma)$ be a curved differential graded algebra. For any curved $\cA$-module $(M^\bullet ,d_M)$ define
$$
\voc :  \Xi(A,M)^{n} \to  \Xi(A,M)^{n+1}, \qquad \xi\mapsto d_M\circ \xi - (-1)^n \xi\circ d.
$$
Then  $(M^\bullet ,\voc)$ is an integrable $\ZZ$-divergence.
\end{example}
\begin{proof}
Set $(E^\bullet, d_E)$ to be $(A^\bullet, d)$ in Lemma~\ref{lem.ind}.
\end{proof}

\begin{lemma}\label{lem.hom}
Let $\cA = (A^\bullet, d,\gamma)$ be a curved differential graded algebra, $B$ an algebra and let $M^\bullet$ be a $\ZZ$-graded $(A,B)$-bimodule, bounded from below.  Let $(M^\bullet, \cov)$ be a {\em left} $\ZZ$-connection over $\cA$ such that $\cov$ is right $B$-linear. Write $\cov^k$ for the component $M^\bullet \to  A^k\ot_A M^{\bullet - k+1}$ of $\cov$, and $\cov^{k,n}$ for the restriction of $\cov^k$ to $M^n$. Denote by $N^\bullet$ the graded $B$-dual of $M^\bullet$, i.e. $N^{n} = \rhom B {M^{-n}} B$. Then the family
$$
\voc_k^{-n+1} :=  \rhom B {\cov^{k,n}} B, \qquad k\in \ZZ
$$ 
determines a $\ZZ$-divergence on $N^\bullet$.
\end{lemma}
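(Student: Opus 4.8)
The plan is to read the divergence off the characterisation in Remark~\ref{rem.div}: giving a $\ZZ$-divergence on $N^\bullet$ amounts to giving a family of maps $\voc^p_k$, finite in $k$ for each $p$, satisfying the Leibniz identity \eqref{nablan0}. So everything reduces to three checks: that the maps $\rhom B{\cov^{k,n}}B$ have the right source and target, that only finitely many are nonzero in each divergence degree, and that \eqref{nablan0} holds. For the first, since $\cov$ (hence each $\cov^{k,n}\colon M^n\to A^k\ot_A M^{n-k+1}$) is right $B$-linear by hypothesis, the graded $B$-dual applies, and the tensor--hom adjunction
$$
\rhom B{A^k\ot_A M^{n-k+1}}B\;\cong\;\rhom A{A^k}{\rhom B{M^{n-k+1}}B}=\rhom A{A^k}{N^{k-n-1}}
$$
turns $\rhom B{\cov^{k,n}}B$ into a map $\rhom A{A^k}{N^{k-n-1}}\to\rhom B{M^n}B=N^{-n}$, which is exactly the shape of a component of a degree-one operator on $\Xi(A,N)^\bullet$ in the sense of Remark~\ref{rem.div}.

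The finiteness is precisely where the hypothesis that $M^\bullet$ is bounded below enters. Fixing the divergence degree fixes $n$, and as $k$ grows the target $A^k\ot_A M^{n-k+1}$ eventually vanishes because $M^{n-k+1}=0$ for $k$ large (while $A^\bullet$ is $\NN$-graded, so $k\ge 0$). Hence $\cov^{k,n}=0$, and thus $\rhom B{\cov^{k,n}}B=0$, for all but finitely many $k$, which is the required finiteness.

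The substance is the Leibniz rule. I would begin from the defining Leibniz rule of the left $\ZZ$-connection, restricted to $M^\bullet=A\ot_A M^\bullet$ (with $A:=A^0$) and read off in components: for $a\in A$ and $m\in M^n$,
$$
\cov^{k,n}(am)=\delta_{k1}\,da\ot m+a\,\cov^{k,n}(m),
$$
there being no sign since $a$ has degree $0$. Dualising and passing through the adjunction above, left multiplication by $a$ on the $A^k$-slot of $\cov^{k,n}(m)$ becomes the right $A$-action on $\Xi(A,N)$ of Definition~\ref{def.hom-conn}, the term $a\,\cov^{k,n}(m)$ yields $\voc_k(\xi)\,a$ under the dual right $A$-action on $N^{-n}$, and the correction $\delta_{k1}\,da\ot m$ yields the term $\pm\delta_{k1}\,\xi\,da$; matching with \eqref{nablan0} finishes the argument. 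The step I expect to be the main obstacle is fixing this sign: bare precomposition $\xi\mapsto\xi\circ\cov^{k,n}$ produces the wrong sign in front of $\delta_{k1}\,\xi\,da$, so one must interpret $\rhom B{\cov^{k,n}}B$ as the graded dual (carrying the Koszul sign $-(-1)^{|\xi|}$) rather than as naive composition. Getting this convention right is the only delicate point.

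The cleanest way to pin the signs down, and a useful sanity check, is to recognise the construction as an instance of Lemma~\ref{lem.ind}(1). The left $\ZZ$-connection makes $E^\bullet:=A^\bullet\ot_A M^\bullet$ a curved $(\cA,\cB)$-bimodule, where $\cB=(B,0,0)$ is the discrete curved differential graded algebra on $B$; its Leibniz and integrability conditions are Definition~\ref{def.bimod}(a),(b) with $\gamma_B=0$. Taking $\cM=\cB$ in Lemma~\ref{lem.ind}(1) produces a curved right $\cA$-module $\Xi_B(E,B)^\bullet$ with differential $\xi\mapsto-(-1)^{|\xi|}\xi\circ\cov$, and the decomposition $E^{-p}=\bigoplus_k A^k\ot_A M^{-p-k}$ together with the adjunction above gives a canonical isomorphism $\Xi_B(E,B)^\bullet\cong\Xi(A,N)^\bullet$ of graded right $A^\bullet$-modules under which that differential becomes the operator with components $\voc_k$. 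By the remark following Definition~\ref{def.hom-conn}, a curved $\cA$-module structure on $\Xi(A,N)^\bullet$ is the same as an integrable $\ZZ$-divergence, so this route in fact delivers integrability as well, of which the asserted Leibniz property is the first half.
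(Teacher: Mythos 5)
Your componentwise strategy coincides with the paper's own proof: the paper likewise verifies the hypotheses of Remark~\ref{rem.div} by computing the source and target of $\rhom B {\cov^{k,n}} B$ through the hom--tensor adjunction $\rhom B {A^k\ot_A M^{n-k+1}} B \cong \rhom A {A^k}{N^{-n+k-1}}$ and then checking the Leibniz identity \eqref{nablan0} directly on the components. Your explicit finiteness argument from boundedness below is a genuine plus --- the paper leaves the role of this hypothesis implicit, and your explanation of it is correct. The one place you part company with the paper is the sign convention, and here your diagnosis is backwards relative to the paper's proof. The paper's (implicit) convention for a left $\ZZ$-connection mirrors Example~\ref{ex.connection}, where $(-1)^k\cov^1$ is a usual covariant derivative on $M^k$: its proof uses
$$
\cov^{1,n}(am) = a\,\cov^{1,n}(m) + (-1)^n\, da\ot m,
$$
with the sign governed by the degree of $m$ even though $|a|=0$. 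Under that convention, bare precomposition $\xi\mapsto \xi\circ\cov^{k,n}$ --- which is what the notation $\rhom B {\cov^{k,n}} B$ means, by the conventions fixed just before Definition~\ref{def.contramodule} --- satisfies \eqref{nablan0} on the nose, with the needed $(-1)^n$ supplied by the connection itself, and no Koszul correction is required. Your claim that naive composition ``produces the wrong sign'' is true only under your unsigned left Leibniz rule $\cov^{1,n}(am)= da\ot m + a\,\cov^{1,n}(m)$; with that choice your Koszul-signed dual $-(-1)^{|\xi|}\,\xi\circ\cov$ does repair \eqref{nablan0}, so your construction is internally consistent, but it defines maps differing from the stated $\rhom B {\cov^{k,n}} B$ by signs in odd degrees. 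Since the paper never formally defines left $\ZZ$-connections, this is a defensible alternative convention rather than an error, but you should be aware the paper resolves the ambiguity the other way.

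One caveat on your closing ``sanity check'' via Lemma~\ref{lem.ind}(1): that lemma applies to curved bimodules, and making $E^\bullet = A^\bullet\ot_A M^\bullet$ into a curved $(\cA,\cB)$-bimodule with $\cB=(B,0,0)$ requires the integrability condition $\cov\circ\cov(e)=\gamma e$, which Lemma~\ref{lem.hom} does not assume --- its conclusion is only a $\ZZ$-divergence, not an integrable one. As stated, your route therefore proves a stronger conclusion under a stronger hypothesis; for the lemma as actually stated you may only borrow the Leibniz-rule portion of the verification inside the proof of Lemma~\ref{lem.ind}(1) (which indeed does not use integrability), not the lemma itself, and the remark that this route ``delivers integrability as well'' overreaches the hypotheses. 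The identification $\Xi_B(E,B)^\bullet\cong\Xi(A,N)^\bullet$, turning the direct sum decomposition of $E^{-n}$ into the direct product defining $\Xi(A,N)^n$, is correct and is a nice conceptual supplement to the paper's bare-hands computation.
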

\begin{proof}
By the hom-tensor relations, the domain of $ \rhom B {\cov^{k,n}} B$ is
$$
\rhom B {A^k\ot_A M^{n - k+1}} B   \cong \rhom A {A^k}{\rhom B {M^{n - k+1}} B }  = \rhom A {A^k}{N^{-n + k-1}}, 
$$
while the codomain is $\rhom B {M^{n}} B = N^{-n}$, as required.

Since $\cov$ is a left covariant derivative, the maps $\cov^{k\neq 1}$ are left $A$-linear, while for all $m\in M^n$, $a\in A$,
$$
\cov^{1,n} (am) = a\cov^{1,n}(m) + (-1)^nda\ot m.
$$
Thus $\voc_{k\neq 1}^n$ are right $A$-linear and, for all $\xi \in \rhom A {A^1} {N^{n+1}} \cong \rhom B {A^1 \ot M^{-n-1}} B$, $a\in A$ and $m\in M^{-n-1}$,
\begin{eqnarray*}
\voc_1^n(\xi a)(m) &=& \xi a (\cov^{1,-n-1}(m)) = \xi(a \cov^{1, -n-1}(m)) = \xi (\cov^1 (am)) + (-1)^n\xi (da\ot m)\\
& = &\voc^n_1(\xi)(am) + (-1)^n\xi (da)(m) =
(\voc_1^n(\xi)a+ (-1)^n\xi (da))(m).
\end{eqnarray*}
Therefore, the $\voc_k^n$ define a $\ZZ$-divergence over $\cA$. 
\end{proof}

\begin{example}\label{ex.div.comatrix}
Let $P$ be a progenerator right $A$-module, take $B\subset S = \rend A P$ and consider the curved differential graded algebra $\cA({}_BP_A)$ described in Example~\ref{ex.comatrix}. As in Example~\ref{ex.comatrix}, denote by $e\in P \ot_A P^*$ the dual basis element and take $x= \sum_i \xi_i \ot x_i \in P^*\ot _B P$, an element in the inverse image of $1_A$ under the evaluation map. $P^*$ is a left comodule over the comatrix coring $P^*\ot _B P$, and thus $(P^*, \cov)$ is a left cohesive module over $\cA({}_BP_A)$, with
$$
\cov^1 : P^* \to P^*\ot_B A^1, \qquad \chi\mapsto  x\ot \chi -\chi \ot e .
$$
Since, for all $b\in B$, $be=eb$, the map $\cov^1$ is right $B$-linear, hence by Lemma~\ref{lem.hom} it induces a $\ZZ$-divergence over $\cA({}_BP_A)$ with the module part $N = \rhom B {P^*} B$ and with
$
\voc_1 : \rhom A {A^1 \ot_A P^*} B \cong \rhom A {A^1} N \to N$ given by $\voc_1(f)(\chi) = f(x\ot \chi - \chi\ot e)$.
\end{example}

\begin{proposition}\label{prop.contra-hom}
Let $\cC = (C,\Delta, \eps)$ be an $A$-coring and let $x\in C$. Given a complex of $C$-contramodules $\delta^n: (M^n,\alpha_n) \to (M^{n+1},\alpha_{n+1})$, define
$$
\voc_0^n = \delta^n, \qquad \voc_1^n: \rhom A C {M^{n+1}} \to M^{n+1}, \qquad \xi \mapsto (-1)^n\left( \xi(x) - \alpha_{n+1}(\xi)\right),
$$ 
and $\voc^n_k =0$, for $k\geq 2$. Then the $\voc_k^n$ define an integrable $\ZZ$-divergence $(M^\bullet,\voc)$ with respect to curved differential graded algebra $T^\flat(\cC, x)$, as described in Remark~\ref{rem.div}. Furthermore, if $\eps(x) =1$, then 
$$
\xymatrix{
\voc^{+n}_1 : \rhom A {C^+} {M^{n+1}} \ar[rrr]^-{\rhom A {\pi_x^R} {M^{n+1}}} &&& \rhom A C {M^{n+1}}\ar[rr]^-{\voc_1^n} &&  M^{n+1},}
$$
where $\pi_x^R: C\to C^+$, $c\mapsto c- x\eps(c)$, define an integrable $\ZZ$-divergence with respect to $T(\cC, x)$.
\end{proposition}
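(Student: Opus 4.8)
The plan is to assemble $\voc$ from the prescribed components $\voc_k^n$ via Remark~\ref{rem.div}, then verify in turn the Leibniz rule \eqref{nablan0} and the integrability identity $\voc\circ\voc(\xi)=-\xi\gamma_x$, and finally treat the based case by restricting along $\pi_x^R$. Since $\voc_k^n=0$ for $k\geq 2$, the family is finite in each degree $n$, so the only content of the first reduction is \eqref{nablan0}.

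For the Leibniz rule I would check the three ranges of $k$ separately. For $k=0$ the map $\voc_0^n=\delta^n$ is a morphism of contramodules, hence right $A$-linear, which is \eqref{nablan0} with its correction term absent since $\delta_{01}=0$; for $k\geq 2$ both sides vanish. The only genuine case is $k=1$: writing $\voc_1^n(\xi a)=(-1)^n\bigl((\xi a)(x)-\alpha_{n+1}(\xi a)\bigr)$, expanding the right action $(\xi a)(c)=\xi(ac)$ on $\rhom A C {M^{n+1}}$, using right $A$-linearity of $\alpha_{n+1}$ and the formula $d_x a=[x,a]$, the terms $\xi(x)a$ cancel and one is left with $\voc_1^n(\xi)a+(-1)^n\xi(d_x a)$. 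This is routine but sign-sensitive.

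The integrability identity is the core, and I would model it on the proof of Proposition~\ref{prop.comod}, of which this is the contravariant dual. Expanding $\voc\circ\voc$ through \eqref{0tom} and projecting onto the summands $\rhom A {A^m}{\cdot}$, the condition $\voc^2=-\cdot\,\gamma_x$ decomposes, exactly as the integrability in Proposition~\ref{prop.con}, into three identities: the vanishing $\delta^{n+1}\circ\delta^n=0$, supplied by the complex hypothesis; a mixed $\delta$--$\voc_1$ term that vanishes because each $\delta^n$ is a contramodule morphism (the dual of colinearity); and a $\voc_1$-quadratic identity that must reproduce $-\gamma_x=\Delta(x)-x\otimes x$. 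The contramodule axioms enter in this last identity: the iterated $\alpha$-term is read as $\alpha\circ\rhom A C\alpha$, the $\Delta(x)$-term as $\alpha\circ\rhom A\Delta M$, and contramodule coassociativity (Definition~\ref{def.contramodule}) identifies them, while the $x\otimes x$-contribution matches a double application of $\xi\mapsto\xi(x)$. I expect the main obstacle to be invoking coassociativity correctly through the adjunction isomorphism $\rhom A {C\otimes_A C}M\cong\rhom A C {\rhom A C M}$ while keeping the alternating signs straight.

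For the based case $\eps(x)=1$ I would adapt the same computation. By Theorem~\ref{thm.crg-cdga}(3) the differential $d_x$ sends $A$ into $C^+$ and restricts to $T_A(C^+)$, and $\gamma_x\in C^+\otimes_A C^+$, so $T(\cC,x)$ is the relevant sub-curved differential graded algebra and its $\Xi$-complex is built on $\rhom A {(C^+)^{\otimes_A k}}{\cdot}$. Precomposition with $\rhom A {\pi_x^R}{M^{n+1}}$ reinterprets a map on $C^+$ as a map on $C$ to which $\voc_1^n$ applies; because $\pi_x^R(x)=x-x\eps(x)=0$ the $\xi(x)$-term drops and $\voc_1^{+n}(\eta)=-(-1)^n\alpha_{n+1}(\eta\circ\pi_x^R)$. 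Rerunning the Leibniz and integrability checks for these restricted maps, the extra summand $-x\eps(c)$ hidden in $\pi_x^R$ produces, through $\eps$, precisely the maps $c\mapsto m\eps(c)$ on which the contramodule counit axiom gives $\alpha_{n+1}(c\mapsto m\eps(c))=m$; this is what restores the correct coefficient in \eqref{nablan0}. Thus counitality of the contramodule, unused in the $T^\flat$ case, becomes the new and essential ingredient here.
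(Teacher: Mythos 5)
Your proposal is correct and follows the paper's proof essentially step for step: the same componentwise verification of \eqref{nablan0} (only $k=1$ is nontrivial, via $d_xa=[x,a]$ and right $A$-linearity of $\alpha_{n+1}$), the same expansion of $\voc\circ\voc$ through \eqref{0tom} resolved by contra-associativity, $\delta^{n+1}\circ\delta^n=0$ and the contramodule-morphism property — which the paper likewise leaves as a sketched ``lengthy but straightforward'' computation modelled on Proposition~\ref{prop.comod} — and the same based-case argument via $\overline{\xi}=\xi\circ\pi_x^R$ with counitality of the $\alpha_n$ entering exactly, and only, there. Your closing observation that counitality is unused in the $T^\flat$ case matches the paper precisely, so there is nothing genuinely different to compare.
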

\begin{proof}
Using the definition of $d_x$ and the right $A$-linearity of $\alpha_n$ one easily checks that the $\voc_1^n$ satisfy Equation~\eqref{nablan0}. Furthermore $\nabla_{k\neq 1}^n$ are all $A$-linear, hence they all induce a $\ZZ$-divergence, with components given by Equation~\eqref{0tom}, 
\begin{eqnarray*}
\voc^{n,m}(\xi)(c) &= &\delta^{m+n}(\xi_m(c)) 
- (-1)^{n}\xi_{m+1}(x\ot c) \\
&&+ \sum_{k=1}^{m}(-1)^{k+n+1}\xi_{m+1}\left(\Delta^{(m)}_k\left(c\right)\right) + (-1)^{m+n}\alpha_{m+n+1}(\xi_{m+1} c),
\end{eqnarray*}
for all $\xi  = (\xi_k) \in \prod_{k} \rhom A {C^{\otimes _Ak}} {M^{n+k}}$ and $c\in C^{\otimes _Am}$. A rather lengthy but straightforward calculation, not dissimilar to that in the proof of Proposition~\ref{prop.comod}, which uses the associativity of contra-actions $\alpha_n$, that $\delta^{n+1}\circ \delta^n =0$ and that each of the $\delta^n$ is a contramodule map, as well as the fact that $d_x\circ d_x(c) = [\gamma_x,c]$ and the Leibniz rule allows one to conclude that
$$
\voc^{n+1,m} (\voc^n(\xi))(c) = -(\xi_{m+2}\gamma_x)(c),
$$
for all $\xi  = (\xi_k) \in \prod_{k} \rhom A {C^{\otimes _Ak}} {M^{n+k}}$ and $c\in C^{\otimes _Am}$. Therefore, $\voc \circ \voc (\xi) = -\xi \gamma_x$, as required for an integrable $\ZZ$-divergence.

The second assertion is a consequence of the first which can be seen as follows. Take $\xi \in \rhom A {C^+} {M^{n+1}}$, and let $\overline{\xi} := \xi \circ \pi_x^R$, so that $\voc^{+n}_1(\xi) = \voc_1^n(\overline{\xi})$. Note that $\overline{\xi} \mid_{C^+} = \xi$, so, in particular, $\overline{\xi}(d_xa) = \xi(d_xa)$. Note further, that
$
\overline{\xi a} = \overline{\xi} a +\xi(d_xa)\eps.
$
Hence,
\begin{eqnarray*}
\voc^{+n}_1(\xi a) &=& \voc_1^n(\overline{\xi a}) = \voc_1^n(\overline{\xi} a) +\voc_1^n(\xi(d_xa)\eps)\\
&=& \voc_1^n(\overline{\xi}) a + (-1)^n\xi(d_xa) +(-1)^{n+1}\xi(d_xa)\eps(x) + (-1)^n\alpha_{n+1}(\xi(d_xa)\eps) \\
&=& \voc^{+n}_1(\xi) a + \xi(d_xa),
\end{eqnarray*}
by the counitality of the $\alpha_n$ and the fact that $\eps(x)=1$. Thus the $\voc_1^{+n}$ induce a $\ZZ$-divergence as claimed. The integrability follows by the same arguments as for $\voc$.
\end{proof}

\end{document}